\newtheorem{Theorem}{Theorem}[section]
\newtheorem{Lemma}[Theorem]{Lemma}
\newtheorem{Proposition}[Theorem]{Proposition}
\newtheorem{Definition}{Definition}[section]
\newcommand{\Rn}{{\mathbb R}^{N}}
\newcommand{\N}{\mathbb{N}}
\newcommand{\ba} {\beta}
\newcommand{\ga} {\gamma}
\newcommand{\De} {\Delta}
\newcommand{\la} {\lambda}
\newcommand{\pa}{\partial}
\newcommand{\na} {\nabla}
\newcommand{\va} {\varphi}
\newcommand{\Ome}{\Omega}
\newcommand{\no}{\nonumber}
\newcommand{\bdw}{\partial\Omega}
\newcommand{\R}{\mathbb{R}}
\newcommand{\per}{\phi_{\epsilon,R}}
\newcommand{\deb}{\rightharpoonup}
\newcommand{\starstar}{2^{*\!*} }
\newcommand{\Huno}{H^2_0(\Omega)}
\newcommand{\Iom}{\int_{\Omega}}
\newcommand{\qb}{q_{\beta}}
\begin{document}

\title{Caffarelli-Kohn-Nirenberg type equations of fourth order with the critical exponent and Rellich potential}

\author{Mousomi Bhakta
\footnote
{Department of Mathematics, Indian Institute of Science Education and Research, Pune, India.
Email: {mousomi@iiserpune.ac.in}}}

\date{}

\maketitle

\begin{abstract}
\noindent
\footnotesize We study the existence/nonexistence of positive solutions of 
$$
{\Delta^2u-\mu\frac{u}{|x|^4}=\frac{|u|^{q_{\ba}-2}u}{|x|^{\beta}}\quad\textrm{in $\Omega$,}}
$$
when $\Omega$ is a bounded domain and $N\geq 5$, $q_{\ba}=\frac{2(N-\ba)}{N-4}$, $0\leq \ba<4$ and $0\leq\mu<\big(\frac{N(N-4)}{4}\big)^2$. We prove the nonexistence result when $\Omega$ is an open subset of $\Rn$ which is star shaped with respect to the origin. We also study the existence of positive solutions  when $\Omega$ is a smooth bounded domain with non trivial topology and  $\ba=0$, $\mu\in(0,\mu_0)$, for certain $\mu_0<\big(\frac{N(N-4)}{4}\big)^2$ and $N\geq 8$. Different behavior of PS sequences have been obtained depending whether $\ba=0$ or $\ba>0$.
%
\bigskip

\noindent
\textbf{Keywords:} {Caffarelli-Kohn-Nirenberg type equations, Palais-Smale decomposition, nonexistence, Pohozaev identity, nontrivial topology, contractible domain, fourth order equation with singularity.}
\medskip

\noindent
\textit{2010 Mathematics Subject Classification:} {31B30, 35J35, 35J75, 35J91.}
\end{abstract}

\section{Introduction}

In this article  we  study  the singular semilinear fourth order elliptic problem:
\begin{equation}
\label{eq:problem}
\begin{cases}
  \Delta^2u-\mu\frac{u}{|x|^4}=\frac{|u|^{q_{\ba}-2}u}{|x|^{\beta}} &\textrm{in $\Omega$},\\
u\in  H^2_0(\Omega),\\
u>0 \quad\text{in}\ \Omega,
\end{cases}
\end{equation}
where
$\Delta^2 u=\Delta(\Delta u)$, $\Omega$ is a smooth bounded domain and
\begin{equation}
\label{eq:ass_q_mu}
 N\geq 5, \ q_{\ba}=\frac{2(N-\ba)}{N-4} , \  0\leq\ba<4 \ \text{and}\  0\leq\mu<\bar\mu=\displaystyle\left(\frac{N(N-4)}{4}\right)^2.
\end{equation}

Semilinear elliptic equations with biharmonic operator arise in continuum mechanics, biophysics, differential geometry. In particular in  the modeling of thin elastic plates, clamped plates and in the study of the Paneitz-Branson equation and the Willmore equation  (see \cite{GGS} and the references therein for more details). 

\begin{Definition}
We say $u\in H^2_0(\Omega)$ is a solution to \eqref{eq:problem} if $u>0$ in $\Omega$ and satisfies
 $$\int_{\Omega}\displaystyle\left[\Delta u\Delta\phi-\mu\frac{u\phi}{|x|^4}\right]dx=\int_{\Omega}\frac{|u|^{q_{\ba}-2}u\phi}{|x|^{\ba}}dx \quad \forall\ \phi\in H^2_0(\Omega).$$
\end{Definition}

\vspace{2mm}

Equivalently $u$ is a critical point of the functional 
\begin{equation}\label{I-mu}
I_{\mu}(u)=\frac{1}{2}\int_{\Omega}\displaystyle\left[|\Delta u|^2-\mu\frac{|u|^2}{|x|^4}\right]dx-\frac{1}{q_{\ba}}\int_{\Omega}\frac{|u|^{q_{\ba}}}{|x|^{\ba}}dx  \quad u\in H^2_0(\Omega).
\end{equation}

$I_{\mu}$ is a well defined $C^1$ functional in $H^2_0(\Omega)$, thanks to the following Rellich inequality (\cite{Rel54}, \cite{Rel69}) :
\begin{equation}\label{Rellich}
\int_{\Rn}|\De u|^2dx\geq \displaystyle\bar\mu\int_{\Rn}|x|^{-4}|u|^2 dx\quad\forall \   \  u\in \mathcal C^{\infty}_{0}(\R^N),
\end{equation}
and the  Cafferelli-Kohn-Nirenberg (CKN) inequality of fourth order ( \cite{BM}, \cite{CKN}, \cite{CM}) :
\begin{equation}\label{CKN}
\int_{\Rn}|\De u|^{2}dx\ge C\left(\int_{\Rn}
|x|^{-\beta}|u|^{q_{\ba}}dx\right)^{2/\qb}\quad \forall \ u\in \mathcal C^{\infty}_0(\Rn), 
\end{equation}
where $C=C(N, \ba)>0$. Rellich inequality is generalization of the following Hardy inequality:
\begin{equation}\label{Hardy}
\displaystyle\left(\frac{N-2}{2}\right)^2\int_{\Rn}\frac{|u|^2}{|x|^2}dx\leq\int_{\Rn}|\nabla u|^2dx \quad\forall\ u\in C^{\infty}_0(\Rn).
\end{equation}

\textit{Remark}: It is well known that when $\Omega$ is a smooth bounded domain Hardy inequality holds for every $u\in H^1_0(\Omega)$ but the best constant $\displaystyle\left(\frac{N-2}{2}\right)^2$ is never achieved. 

\vspace{2mm}

From literature we know that the usual norm in $H^2(\Omega)$ is $\displaystyle\left(\int_{\Omega}\sum_{0\leq|\alpha|\leq 2}|D^{\alpha}u|^2 dx\right)^\frac{1}{2}$. Thanks to interpolation theory, one can neglect intermediate derivates and see that 
\begin{equation}\label{norm-1}
||u||_{H^2(\Omega)}=\displaystyle\left(\Iom|u|^2 dx+\Iom|D^2u|^2 dx\right)^\frac{1}{2}
\end{equation}
 defines a norm which is equivalent to the usual norm in $H^2(\Omega)$ (see \cite{A}). As $\Omega$ is a smooth bounded domain and $H^2_0(\Omega)$ is the closure of $C_0^{\infty}(\Omega)$ w.r.t. the norm in  $H^2(\Omega)$, invoking \cite[Theorem 2.2]{GGS} we find that
\begin{equation}\label{norm-2}
||u||_{H^2_0(\Omega)}=\displaystyle\left(\Iom|\Delta u|^2 dx\right)^\frac{1}{2}
\end{equation}
defines an quivalent norm to \eqref{norm-1}. Now onwards we will consider $H^2_0(\Omega)$ endowed with the norm defined in \eqref{norm-2}.

We also note that as $\mu<\bar\mu$, applying Rellich inequality it is not diffcicult to check that  $\displaystyle\left(\int_{\Omega}\displaystyle\left[|\Delta u|^2-\mu\frac{|u|^2}{|x|^4}\right]dx\right)^\frac{1}{2}$ becomes an equivalent norm to \eqref{norm-2}.  When $\ba=0$, \eqref{CKN} is the Sobolev inequality by using the equivalence between norms explained above.. When $\ba=0$ we denote $q_0$ as $\starstar:=\frac{2N}{N-4}$.
\\
We define
\begin{equation}\label{S-mu}
S_{\mu}^{\ba}:=\inf_{u\in\Huno, u\not\equiv 0}\frac{\displaystyle\int_{\Omega}\left[|\Delta u|^2-\mu\frac{|u|^2}{|x|^4}\right]dx}{\displaystyle\left(\int_{\Omega}\frac{|u|^{\qb}}{|x|^{\ba}}dx\right)^\frac{2}{\qb}}, 
\end{equation}
where $0\leq\mu<\bar\mu$. Using \eqref{Rellich} and \eqref{CKN} we see that $S_{\mu}^{\ba}>0$.

\vspace{2mm}

Problem \eqref{eq:problem} is closely related to some intensively studied topics. For instance, when $\mu=0$, the fourth order differential
equation in \eqref{eq:problem} is equivalent to
\begin{equation}
\begin{cases}
-\Delta u = v,\\
-\Delta v = \frac{|u|^{\qb-2}u}{|x|^{\ba}},
\end{cases}
\end{equation}
which can be regarded as a H\'{e}non-Lane-Emden type system; see for instance (\cite{CR}, \cite{FG}, \cite{Soup}) and the references therein.
The second-order version of \eqref{eq:problem}, namely,
\begin{equation}
\begin{cases}
-\Delta u = \frac{u}{|x|^2}+\frac{|u|^{p_t-2}u}{|x|^t} \quad\text{in}\ \Omega,\\
u\in H^1_0(\Omega),
\end{cases}
\end{equation}
where $p_t=\frac{2(N-t)}{N-2}$, has been widely studied in recent years in bounded domains and in the whole space $\Rn$ for the case $t=0$ or $0<t<2$ (see for instance \cite{BS}, \cite{FG1}, \cite{GM}, \cite{HS}, \cite{Mu}, \cite{Smets}, \cite{Ter} and the references therein). In a very recent work \cite{PP}, a problem related to \eqref{eq:problem} when $\ba=0$ with Navier boundary condition  has been studied by  Pérez-Llanos and Primo. More precisely, in \cite{PP} the authors have studied the optimal power $p$ for existence/nonexistence of distributional solutions of the following problem:
\begin{equation}
\begin{cases}
\Delta^2 u = \la\frac{u}{|x|^4}+u^p+cf \quad\text{in}\quad\Omega,\\
u>0  \quad\text{in}\quad\Omega,\\
u=\Delta u=0  \quad\text{on}\quad\Omega,
\end{cases}
\end{equation}
in a smooth and bounded domain such that $0\in\Omega$. 

\vspace{2mm}

When $\Omega=\Rn$, the existence of solution to \eqref{eq:problem} has been studied in \cite{BM}. In a bounded domain the problem \eqref{eq:problem} does not have a solution in general, due to the critical
nature of the equation.  The main difficulty here is singularity at the origin. The sufficient regularity of the test function is guaranteed with the use of cut-off fuctions. In addition, the passages to the limit are delicate and clever estimates are needed. The nonexistence of solution to \eqref{eq:problem} in bounded domain is due to the lack of compactness of the functional $I_{\mu}$, given in \eqref{I-mu}, as a result of concentration phenomenon. We analyze this noncompactness in Theorem 3.1, showing that concentration takes place along a single profile when $\ba > 0$ whereas if $\ba=0$, concentration occurs along two different profiles. Thanks to this behavior we are able to prove an existence result  in a non-contractible bounded domain when $\ba=0$ and the parameter $\mu\in(0,\mu_0)$ for some suitable $\mu_0$. 

\vspace{3mm}

This paper is organized as follows: We prove a Pohozaev type nonexistence result in Section 2. Section 3 is devoted to the study of PS sequences when $\ba>0$ and $\ba=0$.  Finally, in Section 4 the existence result when $\ba=0$ in suitable bounded domains is given.

\vspace{2mm}

{\bf Notations :} We denote by $H^2(\Omega)$ the usual Sobolev space $W^{2,2}(\Omega)$ and by $D^{2,2}(\Ome)$ the closure of $C_0^\infty(\Ome)$ with respect to the norm $\left(\Iom |\Delta u|^2\right)^{\frac{1}{2}}$. Similarly $D^{2,2}(\Rn)$ can be defined. By $D^{1,2}(\Rn)$ we denote the closure of $C^{\infty}_0(\Rn)$ with respect to the norm $\left(\Iom |\na u|^2\right)^{\frac{1}{2}}$. We note here that, thanks to the previous equivalence relation between the norms, if $u\in D^{2,2}(\Rn)$ then $\na u\in D^{1,2}(\Rn)$. When $\Omega$ is unbounded, for instance $\Omega=\Rn$, it is known that $H^2(\Rn)=H^2_0(\Rn)\subset D^{2,2}(\Rn)$ but the converse inclusion fails. On the other hand, if $\Omega$ is a smooth and bounded domain, then $D^{2,2}(\Omega)=H^2_0(\Omega)$, thanks to \cite[Theorem 2.1]{GGS}.
If $u\in\Huno$ then $u=|\na u|=0$ on $\bdw$. $C$ and $c$ will be general constants which may vary from line to line. $B_r(a)$ will denote the ball of radius $r$, centered at $a$.

\section{Nonexistence result}
In this section we will present the nonexistence result whose proof is based on the Pohozaev identity. The
difficulty in applying this identity is because of the singularity at the origin but we overcome this by using Hardy inequality (\eqref{Hardy})  and Rellich inequality. More precisely, to make the test function smooth, we introduce cut-off functions and pass to the limit with the help of Hardy, Rellich and Sobolev inequalities. 

\begin{Theorem}
Let $\Omega$ be an open subset of $\Rn$ with smooth boundary and star shaped with respect to origin. Then the problem 
\begin{equation}
\label{eq:problem2}
\begin{cases}
  \Delta^2u-\mu\frac{u}{|x|^4}=\frac{|u|^{q_{\ba}-2}u}{|x|^{\beta}} &\textrm{in $\Omega$},\\
u\in H^2_0(\Omega)
\end{cases}
\end{equation}
has a nontrivial nonnegative solution only if $\Omega=\Rn$.
\end{Theorem}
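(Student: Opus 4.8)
The plan is to argue by contradiction using a Pohozaev identity, the decisive point being that the exponent is \emph{exactly} critical, so that $\dfrac{N-\ba}{\qb}=\dfrac{N-4}{2}$. Suppose $\Omega\neq\Rn$ and let $u\in\Huno$ be a nontrivial nonnegative solution of \eqref{eq:problem2}; we may take $0\in\Omega$, and $\bdw$ is then a nonempty smooth hypersurface. First I would establish regularity off the origin: since there the right-hand side $\mu u|x|^{-4}+u^{\qb-1}|x|^{-\ba}$ is as smooth as $u$, an $L^p$-bootstrap together with standard elliptic estimates for $\Delta^2$ (interior and up to $\bdw$) give $u\in C^{\infty}\big(\overline{\Omega}\setminus\{0\}\big)$, so the equation holds classically on $\Omega\setminus\{0\}$; and since $u\in\Huno$ one has $u=|\na u|=0$ on $\bdw$, whence also $D^2u=(\pa_\nu^2u)\,\nu\otimes\nu$ and $\Delta u=\pa_\nu^2u$ there. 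I will also use the finiteness, for this $u$, of $\Iom|\Delta u|^2$ and $\Iom|D^2u|^2$ (equivalence of norms), of $\Iom|x|^{-4}u^2$ (Rellich \eqref{Rellich}), of $\Iom|x|^{-2}|\na u|^2$ (Hardy \eqref{Hardy} applied to $\na u\in D^{1,2}(\Rn)$), and of $\Iom|x|^{-\ba}u^{\qb}$ (by \eqref{CKN}).

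Next I would run the Pohozaev computation with the scale-invariant multiplier $w:=x\cdot\na u+\tfrac{N-4}{2}u$, regularized near the singularity. For $0<\epsilon<R$ let $\per$ be a radial cut-off that is $\equiv1$ on $B_R\setminus B_{2\epsilon}$, vanishes together with all its derivatives on $\overline{B_\epsilon}$ and outside $B_{2R}$, and obeys $|\na\per|\le C\epsilon^{-1}$, $|\Delta\per|\le C\epsilon^{-2}$ near the origin and $|\na\per|\le CR^{-1}$, $|\Delta\per|\le CR^{-2}$ near infinity (if $\Omega$ is bounded only the $\epsilon$-cut-off is needed). As $\per w$ is smooth on $\overline{\Omega}$ and vanishes near $0$, I may multiply the equation by $\per w$ and integrate over $\Omega$. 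Integrating by parts twice and then disposing of the arising third-order term via
\[
\int_{\Omega}\per\,\Delta u\,(x\cdot\na\Delta u)=-\tfrac12\Iom(N\per+x\cdot\na\per)(\Delta u)^2+\tfrac12\int_{\bdw}(x\cdot\nu)(\Delta u)^2,
\]
so that only second-order quantities remain, one finds that \emph{all bulk terms cancel} by criticality: in the biharmonic part the $\Iom\per(\Delta u)^2$ terms cancel, the Rellich part collapses because $\mu\big(-\tfrac{N-4}{2}+\tfrac{N-4}{2}\big)=0$, and the nonlinear part because $-\tfrac{N-\ba}{\qb}+\tfrac{N-4}{2}=0$. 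What is left is $-\tfrac12\int_{\bdw}(x\cdot\nu)(\Delta u)^2\,d\sigma$ on one side and, on the other, only integrals carrying $\na\per$ or $\Delta\per$, supported in the annuli $B_{2\epsilon}\setminus B_\epsilon$ and $B_{2R}\setminus B_R$; since $\per$ is flat on $\pa B_\epsilon$ and $\pa B_R$, no boundary integrals over these spheres occur.

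It then remains to pass to the limit. A typical error integral is, via Cauchy--Schwarz and the pointwise bounds $|\na w|\le C(|\na u|+|x||D^2u|)$, $|w|\le C(|x||\na u|+|u|)$, $|x\cdot\na\per|\le C$, dominated by terms of the form $\big(\int_{A}(\Delta u)^2\big)^{1/2}\big(\int_{A}(|D^2u|^2+|x|^{-2}|\na u|^2+|x|^{-4}u^2)\big)^{1/2}$ and $\int_{A}(|x|^{-4}u^2+|x|^{-\ba}u^{\qb})$ over the shrinking or escaping annulus $A$; since each integrand belongs to $L^1(\Omega)$ by the first step, these all tend to $0$ as $\epsilon\to0$ and $R\to\infty$. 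Hence
\[
\int_{\bdw}(x\cdot\nu)\,(\Delta u)^2\,d\sigma=0.
\]
Since $\Omega$ is star-shaped with respect to the origin, $x\cdot\nu\ge0$ on $\bdw$, so $\Delta u$ vanishes on the nonempty relatively open set $\Gamma:=\{x\in\bdw:x\cdot\nu>0\}$ (and on all of $\bdw$ if $\Omega$ is strictly star-shaped). Put $v:=-\Delta u$, so $-\Delta v=\mu u|x|^{-4}+u^{\qb-1}|x|^{-\ba}\ge0$ in $\Omega$ (using $u\ge0$). If $\Omega$ is strictly star-shaped, then $v=0$ on $\bdw$, hence $v\ge0$ in $\Omega$ by the minimum principle, so $u$ is nonnegative superharmonic on the connected set $\Omega$ with $u=0$ on $\bdw$; if $u\not\equiv0$ the strong maximum principle gives $u>0$ in $\Omega$ and Hopf's lemma gives $\pa_\nu u<0$ on $\bdw$, contradicting $\na u=0$ on $\bdw$. (For a general star-shaped $\Omega$ the overdetermined data $u=\na u=D^2u=0$ on $\Gamma$ forces $u\equiv0$ by unique continuation from the boundary.) Thus $u\equiv0$, contradicting nontriviality, and no such $\Omega\neq\Rn$ exists.

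The hard part will be the second and third steps. One must pick the multiplier so that every bulk term cancels — this is exactly where the critical value of $\qb$ is used — and organize the integration by parts so that, after disposing of the third-order term \emph{before} taking limits, all remaining quantities are controlled by the finite second-order energy of $u$; one then has to verify that the cut-off errors near the origin, and near infinity when $\Omega$ is unbounded, genuinely vanish, which depends on correctly scaling $\Iom|\Delta u|^2<\infty$, the Rellich bound $\Iom|x|^{-4}u^2<\infty$, and the Hardy bound $\Iom|x|^{-2}|\na u|^2<\infty$ (the last available only because $\na u\in D^{1,2}(\Rn)$). Establishing the regularity of $u$ up to $\bdw$ away from the origin is a more routine preliminary.
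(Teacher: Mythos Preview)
Your approach is essentially the paper's: both derive the Pohozaev identity $\int_{\bdw}(x\cdot\nu)(\Delta u)^2\,dS=0$ by testing the equation against $x\cdot\na u$ with a cut-off $\per$ near the origin (and near infinity if $\Omega$ is unbounded), controlling the annular error terms through the same three bounds $\Iom|\Delta u|^2<\infty$, $\Iom|x|^{-4}u^2<\infty$ (Rellich), and $\Iom|x|^{-2}|\na u|^2<\infty$ (Hardy applied to $\na u$). Your choice of multiplier $w=x\cdot\na u+\tfrac{N-4}{2}u$ instead of $x\cdot\na u$ alone is purely cosmetic: the extra piece $\tfrac{N-4}{2}u$ just absorbs the step the paper performs at the very end when it ``compares with the equation'' to cancel the bulk terms.

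One point to tighten in your final contradiction: you apply the minimum principle to $v=-\Delta u$ on \emph{all} of $\Omega$, but $0\in\Omega$ is the singular point and you have not argued that $v$ is superharmonic across the origin (the equation is only known to hold classically on $\Omega\setminus\{0\}$). The paper sidesteps this by choosing a smooth subdomain $D\subset\Omega$ with $\overline D$ disjoint from a neighborhood of $0$ and $\partial D\cap\bdw$ containing an $(N{-}1)$-dimensional open piece; on $D$ everything is $C^4$, and Hopf's lemma at a point of $\partial D\cap\bdw$ gives $\partial_\nu u<0$, contradicting $\na u=0$ on $\bdw$. Restricting your argument to such a $D$ removes the issue. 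Also, your parenthetical unique-continuation remark for the non-strict case is incomplete as stated: from the identity you only obtain $u=\na u=D^2u=0$ on $\Gamma$, which is Cauchy data of order two, whereas a fourth-order operator requires in addition $\partial_\nu\Delta u=0$ (or equivalently third-order data) on $\Gamma$ before unique continuation from the boundary forces $u\equiv 0$.
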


\begin{proof}
For $\epsilon>0$ and $R>0$, we define $\phi_{\epsilon,R}(x)=\phi_{\epsilon}(x)\psi_{R}(x)$ where 
$\phi_{\epsilon}(x)=\phi(\frac{|x|}{\epsilon})$ and $\psi_R(x)=\psi(\frac{|x|}{R})$, $\phi$ and $\psi$ are smooth functions in $\R$ with the properties $0\leq\phi,\psi\leq 1$, with supports of $\phi$ and $\psi$ in $(1,\infty)$ and $(-\infty, 2)$ respectively and $\phi(t)=1$ for $t\geq 2$, and $\psi(t)=1$ for $t\leq 1$.

Assume that \eqref{eq:problem2} has a nonnegative nontrivial solution $u$ in $\Omega$ and $\Omega\not=\Rn$. It's known that $u$ is smooth away from the origin (see \cite[page 235-236]{GGS}) and
hence $(x\cdot\na u)\phi_{\epsilon,R}\in C^3_c(\bar\Omega)$. Multiplying Eq.\eqref{eq:problem2} by this test function  we obtain

\begin{eqnarray}\label{non-1}
\Iom\Delta^2 u(x\cdot\na u)\phi_{\epsilon,R}dx
&=&\mu\Iom\frac{u(x\cdot\na u)}{|x|^4}\phi_{\epsilon,R}dx+\Iom\frac{|u|^{q_{\ba}-2}u}{|x|^{\ba}}(x\cdot\na u)\phi_{\epsilon,R}dx\no\\
&=& I_1+I_2.
\end{eqnarray}
First we will simplify the RHS of \eqref{non-1}. Here we observe that $u\in\Huno$ implies that $u=|\na u|=0$ on $\pa\Omega$.
\begin{eqnarray}\label{non-2}
I_1&=&\mu\Iom\frac{u(x\cdot\na u)}{|x|^4}\phi_{\epsilon,R}dx=\frac{\mu}{2}\Iom\na(|u|^2)\cdot\frac{x}{|x|^4}\phi_{\epsilon,R}dx\no\\
&=&-\displaystyle\sum_{i=1}^N\frac{\mu}{2}\Iom|u|^2\left[\frac{x_i(\phi_{\epsilon,R})_{x_i}}{|x|^4}+\frac{\phi_{\epsilon,R}}{|x|^4}\right]dx-\sum_{i=1}^N\frac{\mu}{2}\Iom|u|^2 x_i\frac{\pa}{\pa x_{i}}\displaystyle\left(\frac{1}{|x|^4}\right)\phi_{\epsilon,R}dx\no\\
&=&-\mu\displaystyle\left(\frac{N-4}{2}\right)\Iom\frac{|u|^2}{|x|^4}\phi_{\epsilon,R}dx-\frac{\mu}{2}\Iom\frac{|u|^2}{|x|^4}(x\cdot\na\phi_{\epsilon,R})dx.
\end{eqnarray}
Note that $\na\phi_{\epsilon}$ and $\na\psi_{R}$ have supports respectively in $\{\epsilon<|x|<2\epsilon\}$  and $\{R<|x|<2R\}$. Therefore in the support of the 2nd integral on the RHS of the above relation $|x\cdot\na\phi_{\epsilon,R}|\leq C$ and thus by dominated covergence theorem we see, 
\begin{equation}\label{no-I_1}
\lim_{R\to\infty}\lim_{\epsilon\to 0}I_1=-\mu\displaystyle\left(\frac{N-4}{2}\right)\Iom\frac{|u|^2}{|x|^4}dx.
\end{equation}
Similarly we can compute $I_2$ and get
\begin{eqnarray}\label{non-3}
I_2 &=&\Iom\frac{|u|^{q_{\ba}-2}u}{|x|^{\ba}}(x\cdot\na u)\phi_{\epsilon,R}dx\no\\
&=&-\displaystyle\left(\frac{N-4}{2}\right)\Iom\frac{|u|^{q_{\ba}}}{|x|^{\ba}}\phi_{\epsilon,R}dx-\frac{1}{\qb}\Iom\frac{|u|^{q_{\ba}}}{|x|^{\ba}}(x\cdot\na\phi_{\epsilon,R})dx.
\end{eqnarray}
Consequently, as before we have
\begin{equation}\label{no-I_2}
\lim_{R\to\infty}\lim_{\epsilon\to 0}I_2=-\displaystyle\left(\frac{N-4}{2}\right)\Iom\frac{|u|^{q_{\ba}}}{|x|^{\ba}}dx,
\end{equation}
Now we compute the LHS of \eqref{non-1}. 
\begin{eqnarray}\label{no-4}
\Iom(\Delta^2 u)(x\cdot\na u)\phi_{\epsilon,R}dx&=&-\displaystyle\sum_{i,j=1}^{N}\Iom\left[(\Delta u)_{x_i}(x_j u_{x_j})_{x_i}\phi_{\epsilon,R}+(\Delta u)_{x_i}(x_j u_{x_j})(\phi_{\epsilon,R})_{x_i}\right]dx\no\\
&=&-\sum_{i=1}^{N}\Iom(\Delta u)_{x_i}u_{x_i}\per dx-\sum_{i,j=1}^{N}\Iom(\Delta u)_{x_i}x_j u_{x_i x_j}\per dx\no\\
&+&\sum_{i,j=1}^{N}\Iom(\Delta u)\big(\delta_{ij}u_{x_j}+x_j u_{x_jx_i}\big)(\phi_{\epsilon, R})_{x_i}dx+\Iom\Delta u\Delta \per(x\cdot\na u)dx\no
\\
&=&\Iom|\Delta u|^2\per dx+\Iom\Delta u(\na u\cdot\na\per)dx+\Iom|\Delta u|^2\per dx\no\\
&+&\Iom\Delta u\big[\na(\Delta u)\cdot x\big]\per dx+\sum_{i=1}^{N}\Iom\Delta u(\na u_{x_i}\cdot x)(\per)_{x_i} dx\no\\
&-&\sum_{i,j=1}^N \int_{\pa\Omega}(\Delta u)u_{x_ix_j}\per(x_j\nu_i)dS+\Iom\Delta u(\na u\cdot\na\per)dx\no\\
&+&\sum_{j=1}^N\Iom(\Delta u)x_j(\na\per\cdot\na u_{x_j})dx+\Iom\Delta u\Delta\per(x\cdot\na u)dx.\end{eqnarray}
As a result,
\begin{eqnarray}\label{no-5}
\Iom(\Delta^2 u)(x\cdot\na u)\phi_{\epsilon,R} dx&=&
2\Iom|\Delta u|^2\per dx+\Iom\Delta u\big[\na(\Delta u)\cdot x\big]\per dx\no\\
&-&\sum_{i,j=1}^N\int_{\pa\Omega}(\Delta u)u_{x_ix_j}\per(x_j\nu_i) dS+I,
\end{eqnarray}
where $I=2I_3+I_4+I_5+I_6$ and\\
$I_3=\displaystyle\Iom\Delta u(\na u\cdot\na\per) dx$,\\
$I_4=\displaystyle\sum_{i=1}^{N}\Iom\Delta u(\na u_{x_i}\cdot x)(\per)_{x_i} dx$,\\
$I_5=\displaystyle\sum_{j=1}^N\Iom(\Delta u)x_j(\na\per\cdot\na u_{x_j})dx$,\\
$I_6=\displaystyle\Iom\Delta u\Delta\per(x\cdot\na u)dx$.\\
A simple computation shows that 2nd term on RHS of \eqref{no-5} equals
\begin{eqnarray}\label{no-6}
\Iom\Delta u\big[\na(\Delta u)\cdot x\big]\per dx &=&-\frac{N}{2}\Iom|\Delta u|^2\per dx-\frac{1}{2}\Iom|\Delta u|^2(x\cdot\na\per) dx\no\\
&+&\frac{1}{2}\int_{\pa\Omega}|\Delta u|^2(x\cdot\nu)\per dS.
\end{eqnarray}
Since $u_{x_i}=0$ on $\bdw$, we have on $\bdw$,
$$\na(u_{x_i})=\frac{\pa}{\pa\nu}(u_{x_i})\cdot\nu=\sum_{l=1}^N(u_{x_lx_i}\nu_l)\cdot\nu,$$ i.e.,  $u_{x_ix_j}=\sum_{l=1}^{N}u_{x_lx_i}\nu_l\nu_j.$
Hence we compute the 3rd term on the RHS of \eqref{no-5} as follows
\begin{eqnarray}\label{no-7}
-\int_{\pa\Omega}(\Delta u)u_{x_ix_j}(x_j\nu_i)\per &=&-\int_{\pa\Omega}(\Delta u)u_{x_lx_i}\nu_l\nu_j\nu_i x_j\per =-\int_{\pa\Omega}(\Delta u)u_{x_lx_i}\nu_l\nu_i(x\cdot\nu)\per \no\\
&=&-\int_{\bdw}\Delta u\frac{\pa^2u}{\pa\nu^2}(x\cdot\nu)\per =-\int_{\bdw}|\Delta u|^2(x\cdot\nu)\per ,
\end{eqnarray}
where we again used the fact that $u_{x_i}=0$ on $\bdw$.
Thus combining \eqref{no-5},\eqref{no-6} and \eqref{no-7} we obtain
\begin{eqnarray}\label{no-lhs}
\Iom(\Delta^2 u)(x\cdot\na u)\phi_{\epsilon,R} dx&=&-\displaystyle\left(\frac{N-4}{2}\right)\Iom|\Delta u|^2\per dx-\frac{1}{2}\int_{\bdw}|\Delta u|^2(x\cdot\nu)\per dS\no\\
&-&\frac{1}{2}\Iom|\Delta u|^2(x\cdot\na\per)dx+I.
\end{eqnarray}
As before 
\begin{equation}\label{no-8}
\lim_{R\to\infty}\lim_{\epsilon\to 0}\frac{1}{2}\Iom|\Delta u|^2(x\cdot\na\per)dx=0.
\end{equation}
Next, we will prove that $\lim_{R\to\infty}\lim_{\epsilon\to 0}I=0$ by estimating  $I_3,I_4,I_5$ and $I_6$.

\begin{eqnarray}\label{no-p9}
I_6 &=&\Iom\Delta u(x\cdot\na u)\Delta\per=\Iom\Delta u(x\cdot\na u)[\psi_R\Delta\phi_{\epsilon}+2\na\phi\na\psi+\phi_{\epsilon}\Delta\psi_R]\no\\
&\leq&\int_{\Omega\cap\{\epsilon\leq|x|\leq2\epsilon\}}|\Delta u||\na u||x|\displaystyle\left(\frac{c}{\epsilon^2}+\frac{c}{\epsilon|x|}\right)+\int_{\Omega\cap\{R\leq|x|\leq2R\}}|\Delta u||\na u||x|\displaystyle\left(\frac{c}{R^2}+\frac{c}{R|x|}\right)\no\\
&+&\int_{\Omega\cap\{\epsilon\leq|x|\leq2\epsilon\}\cap\{R\leq|x|\leq 2R\}}|\Delta u||\na u||x|\frac{c}{\epsilon R}\no\\
& \leq & C\int_{\Omega\cap\{\epsilon\leq|x|\leq 2\epsilon\}}|\Delta u|\frac{|\na u|}{\epsilon}+C\int_{\Omega\cap\{R\leq|x|\leq 2R\}}|\Delta u|\frac{|\na u|}{R}\no\\
&\leq&C\displaystyle\left(\int_{\Omega\cap\{\epsilon\leq|x|\leq2\epsilon\}}|\Delta u|^2\right)^\frac{1}{2}\left(\int_{\Omega\cap\{\epsilon\leq|x|\leq2\epsilon\}}\frac{|\na u|^2}{|x|^2}\right)^\frac{1}{2}\no\\
&+&C\displaystyle\left(\int_{\Omega\cap\{R\leq|x|\leq 2R\}}|\Delta u|^2\right)^\frac{1}{2}\left(\int_{\Omega\cap\{R\leq|x|\leq 2R\}}\frac{|\na u|^2}{|x|^2}\right)^\frac{1}{2}. 
\end{eqnarray}
As $u\in\Huno$, we have $\na u\in H^1_0(\Omega)$. Therefore, invoking Hardy inequality \eqref{Hardy} and the remark following that, 
we have $$\int_{\Omega\cap\{\epsilon\leq|x|\leq2\epsilon\}}\frac{|\na u|^2}{|x|^2} dx\leq\Iom\frac{|\na u|^2}{|x|^2} dx\leq\displaystyle\left(\frac{2}{N-2}\right)^2\Iom|D^2 u|^2 dx\leq C.$$
Similarly, it can be shown that $\displaystyle\int_{\Omega\cap\{R\leq|x|\leq 2R\}}\frac{|\na u|^2}{|x|^2} dx\leq C$. As a result, from \eqref{no-p9} it follows
\begin{equation}\label{no-9}
\lim_{R\to\infty}\lim_{\epsilon\to 0}I_6=C\displaystyle\left(\int_{\Omega\cap\{\epsilon\leq|x|\leq2\epsilon\}}|\Delta u|^2 dx\right)^\frac{1}{2}+C\left(\int_{\Omega\cap\{R\leq|x|\leq 2R\}}|\Delta u|^2 dx\right)^\frac{1}{2}=0.
\end{equation}
Likewise, it can be proved that
\begin{equation}\label{no-10}
\lim_{R\to\infty}\lim_{\epsilon\to 0}I_3=0.
\end{equation}
\begin{eqnarray}\label{no-p11}
I_4=\sum_{i=1}^{N}\Iom\Delta u(\na u_{x_i}\cdot x)(\per)_{x_i}dx\leq C\int_{\Omega\cap\{\epsilon\leq|x|\leq2\epsilon\}}|\Delta u||D^2 u|\frac{|x|}{\epsilon}dx\no\\
+C\int_{\Omega\cap\{R\leq|x|\leq2R\}}|\Delta u||D^2 u|\frac{|x|}{R}dx.
\end{eqnarray}
Applying Holder inequality, the 1st term on the RHS of \eqref{no-p11} can be simplified as follows
\begin{eqnarray}\label{no-pp11}
\int_{\Omega\cap\{\epsilon\leq|x|\leq2\epsilon\}}|\Delta u||D^2 u|\frac{|x|}{\epsilon}dx &\leq& 2 \displaystyle\left(\int_{\Omega\cap\{\epsilon\leq|x|\leq2\epsilon\}}|\Delta u|^2dx\right)^\frac{1}{2}\left(\int_{\Omega\cap\{\epsilon\leq|x|\leq2\epsilon\}}|D^2 u|^2dx\right)^\frac{1}{2}\no\\
&\leq& 2\displaystyle\left(\int_{\Omega\cap\{\epsilon\leq|x|\leq2\epsilon\}}|\Delta u|^2dx\right)^\frac{1}{2}\left(\Iom|D^2 u|^2dx\right)^\frac{1}{2}\no\\
&\leq&C\displaystyle\left(\int_{\Omega\cap\{\epsilon\leq|x|\leq2\epsilon\}}|\Delta u|^2dx\right)^\frac{1}{2}.
\end{eqnarray}
Similarly, the 2nd term on the RHS of \eqref{no-p11}:
\begin{equation}\label{no-ppp11}
\int_{\Omega\cap\{R\leq|x|\leq2R\}}|\Delta u||D^2 u|\frac{|x|}{R}dx\leq C\displaystyle\left(\int_{\Omega\cap\{R\leq|x|\leq2R\}}|\Delta u|^2dx\right)^\frac{1}{2}.\end{equation}
Combining \eqref{no-p11},  \eqref{no-pp11} and  \eqref{no-ppp11} we find 
\begin{equation}\label{no-11}
\lim_{R\to\infty}\lim_{\epsilon\to 0}I_4=0.
\end{equation}
Similarly, it can be shown that
\begin{equation}\label{no-12}
\lim_{R\to\infty}\lim_{\epsilon\to 0}I_5=0.
\end{equation}
Therefore combining \eqref{no-lhs}, \eqref{no-8},\eqref{no-9},\eqref{no-10},\eqref{no-11} and \eqref{no-12} we have
\begin{equation}\label{no-13}
\lim_{R\to\infty}\lim_{\epsilon\to 0}\Iom(\Delta^2 u)(x\cdot\na u)\phi_{\epsilon,R}dx=-\displaystyle\left(\frac{N-4}{2}\right)\Iom|\Delta u|^2dx-\frac{1}{2}\int_{\bdw}|\Delta u|^2(x\cdot\nu)dS.
\end{equation}
Finally, taking into account \eqref{non-1}, \eqref{no-I_1}, \eqref{no-I_2} and \eqref{no-13} we get
\begin{equation*}
-\displaystyle\left(\frac{N-4}{2}\right)\Iom|\Delta u|^2dx-\frac{1}{2}\int_{\bdw}|\Delta u|^2(x\cdot\nu)dS=\displaystyle\left(\frac{N-4}{2}\right)\left[-\mu\Iom\frac{|u|^2}{|x|^4}dx-\Iom\frac{|u|^\qb}{|x|^{\ba}}dx\right].
\end{equation*}
Comparing this with the equation \eqref{eq:problem2} we find
$$\int_{\bdw}|\Delta u|^2(x\cdot\nu)dS=0.$$ 
Since $\Omega$ is star shaped, $x\cdot\nu>0$ on $\bdw$. This implies $\Delta u=0$ on $\bdw$ a.e. \\
Note that $u\in C^4(\bar\Omega\setminus B_r(0))$ for some $r>0$  [see \cite{GGS}]. Consequently, $\Delta u=0$ in $\bdw\setminus B_r(0)$. We choose a smooth subdomain $D\subset\Omega$ such that $\bar D\cap\bar B_r(0)=\emptyset$ and $\pa D\cap\bdw$ contains a $(N-1)$ dimensional open subset. Thus $u\in C^4(\bar D)$. By defining $v=-\Delta u$, we get  $-\Delta v\geq 0$ in $D$. Since $v=0$ on $\pa D\cap\bdw$, we have $v>0$ in $D$ that is, $-\Delta u>0$ in $D$. As $u=0$ on $\pa D\cap\bdw$, by applying Hopf's lemma for the strictly superharmonic function we obtain $\frac{\pa u}{\pa\nu}(x)<0$ where $x$ belongs to $(N-1)$ dimensional open subset of $\pa D\cap\bdw$, which contradicts that $\na u=0$ on $\bdw$.
\hfill{$\square$}
\end{proof}

\section{Palais-Smale characterization}
In this section we study the Palais-Smale sequences (PS sequences, in short)  of the functional \eqref{I-mu}, where $\Omega$ is a bounded domain with smooth boundary and $0\leq\ba<4$ is fixed. 
\begin{Definition}
A sequence $\{u_n\} \subset\Huno$ is called a PS sequence for $I_{\mu}$ at level $d$ if $I_{\mu}(u_n)\to d$ and $I_{\mu}'(u_n)\to 0$ in $H^{-2}(\Omega)$.
\end{Definition}
{\bf Remark:} By standard arguments it is not difficult to check that if $u_n$ is a PS sequence for $I_{\mu}$ at a level $d$, then $d\geq 0$.

\vspace{2mm}

It is easy to see that the weak limit of a PS sequence is a solution to \eqref{eq:problem}, except for the positivity. 
However the main difficulty is that the PS sequence may not converge strongly and hence the weak limit can be zero even if $d> 0$. In this section our aim is to classify PS sequences for $I_{\mu}$. Classification of PS sequences has been done for various problems with the second order operator for having lack of compactness, we refer to \cite{BS}, \cite{Smets},  \cite{Struwe1} among others. While the noncompactness studied in \cite{Smets} is due to a concentration phenomenon occurring through a double profile, in \cite{Struwe1} the noncompactness is a result of concentration occurring through single profiles. We derive a classification theorem for the PS sequences of \eqref{I-mu} in the same spirit of the above results. Here concentration takes place through one or two profiles depending whether $\ba>0$ or $\ba= 0$, respectively. This phenomenon was also observed in \cite{BS}.\\
Let $V$ be a solution to 
\begin{equation}\label{V}
\begin{cases}
\Delta^2u-\mu\frac{u}{|x|^4}=\frac{|u|^{q_{\ba}-2}u}{|x|^{\beta}} \quad\text{in}\quad \Rn,\\
u\in D^{2,2}(\Rn),
\end{cases}
\end{equation} see  \cite[Theorem 1.2]{BM}.
We define a sequence $v_n$ as follows :
\begin{equation}\label{ps-type1}
v_n(x) =\phi(\bar\la_n x)[\la_n^\frac{N-4}{2}V(\la_n x)],
\end{equation}
where $\phi\in C_0^{\infty}(B(0,2))$ with $\phi=1$ in $B(0,1)$ and $\la_n\to\infty$, $\frac{\bar\la_n}{\la_n}\to 0$.

 Then a simple computation shows that $\{v_n\}\subset\Huno$ is a PS sequence for $I_{\mu}$ at a level $d=I_{\mu}(V)$
where $I_{\mu}(V)$ is defined as in \eqref{I-mu} with $\Omega=\Rn$.\\
Assume $\mu>0$ and $\ba=0$. We define a sequence $w_n(x)$ as
\begin{equation}\label{ps-type2}
w_n(x)=\phi(\bar\la_n x)[\la_n^\frac{N-4}{2}W(\la_n(x-x_n)].
\end{equation}
where $W\in D^{2,2}(\Rn)$ satisfying $\Delta^2 W = |W|^{\starstar-2}W$, $\phi$ is as above, $x_n\in\Omega$, $|x_n|\la_n\to\infty$, $\frac{\bar\la_n}{\la_n}\to 0$ and $\liminf_{n\to\infty}\bar\la_n\text{dist}(x_n,\bdw)>2$ (see \cite{BS}, \cite{Smets} for similar type of computation in the case 2nd order version of equation \eqref{eq:problem}).
 Then it is not difficult to see that $w_n\in\Huno$, $w_n\deb 0$ and is also a PS sequence for $I_{\mu}$ at
level $d= I_0(W)$, where $I_0(W)$ is defined as in \eqref{I-mu} with $\Omega=\Rn$ , $\ba=0$ and $\mu=0$.
In fact, in the next theorem we prove that any noncompact PS sequence is  essentially a finite sum of sequences
of the form \eqref{ps-type1} and \eqref{ps-type2} when $\ba=0$ and $\mu>0$ and a finite sum of sequences of the form \eqref{ps-type1}
when $\ba>0$ .

\begin{Theorem}\label{ps-theorem} 
Let $\Omega$ be a bounded domain with smooth boundary and $0\in\Omega$. Let $\{u_n\}$ be a PS sequence for $I_{\mu}$ at level $d$. Suppose $\ba=0$ and $\mu>0$, then $\exists\ \  n_1, n_2\in\N$, and functions $v_n^j \in H^2_0(\Omega), 1 \leq j \leq n_1 $ and $w_n^k \in H^2_0(\Omega), 1 \leq k \leq n_2 $ and $u_0 \in  H^2_0(\Omega)  $ such that upto a subsequence \\
(1) $u_n= u_0+\sum\limits_{j=1}^{n_1}v_n^j +\sum_{k=1}^{n_2}w_n^k +o(1)$, where $o(1) \to 0$ in $H^2_0(\Omega)$\\
(2) $d = I_{\mu}(u_0) +\sum_{j=1}^{n_1}I_{\mu} (V^j) +\sum_{k=1}^{n_2}I_{0} (W^k) + o(1)$\\
where $I_{\mu}'(u_0)=0 $ and $v_n^j, w_n^k$  are PS sequences of the form \eqref{ps-type1} and \eqref{ps-type2} respectively with $V = V^j$ and $W=W^k$.\\
When $\ba>0$,  the same conclusion holds with $W^k =0$ for all $k.$
\end{Theorem}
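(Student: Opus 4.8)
The plan is to adapt the now‑classical global‑compactness (profile‑decomposition) argument of Struwe, in the singular form used by Smets \cite{Smets} and by Bhakta--Sandeep \cite{BS}, while keeping track of the two symmetries relevant here: the dilation $u\mapsto\la^{(N-4)/2}u(\la\cdot)$, under which each of $\Iom|\Delta u|^2$, $\Iom|x|^{-4}|u|^2$ and $\Iom|x|^{-\ba}|u|^{\qb}$ is invariant, and the translation $u\mapsto u(\cdot-y)$, which preserves the first, preserves the third only when $\ba=0$, and destroys the second as the center leaves the origin. The argument runs in three stages: extract the weak limit and split off a first remainder; extract one bubble by a concentration‑function renormalization; iterate until the remainder is strongly convergent.

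\emph{Stage 1 (weak limit).} From $I_\mu(u_n)-\tfrac1{\qb}\langle I_\mu'(u_n),u_n\rangle=\big(\tfrac12-\tfrac1{\qb}\big)\big(\|\Delta u_n\|_2^2-\mu\,\||x|^{-2}u_n\|_2^2\big)$ together with $\qb>2$ and $\mu<\bar\mu$ (so that, by \eqref{Rellich}, the bracket is an equivalent norm on $\Huno$) one gets that $\{u_n\}$ is bounded, hence $u_n\deb u_0$ along a subsequence. Weak convergence in $D^{2,2}(\Rn)$ gives $\Delta u_n\deb\Delta u_0$ in $L^2$ and, via \eqref{Rellich}, $u_n\deb u_0$ in $L^2(\Rn,|x|^{-4}dx)$, while boundedness plus a.e. convergence gives $|u_n|^{\qb-2}u_n\deb|u_0|^{\qb-2}u_0$ in the dual weighted Lebesgue space; therefore $\langle I_\mu'(u_n),\va\rangle\to\langle I_\mu'(u_0),\va\rangle$ for every $\va$, so $I_\mu'(u_0)=0$. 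Setting $u_n^1:=u_n-u_0\deb0$ in $\Huno\subset D^{2,2}(\Rn)$, the quadratic terms split exactly (the cross terms vanish by the two weak convergences just noted) and the two critical‑growth integrals split by the Brezis--Lieb lemma, so $I_\mu(u_n)=I_\mu(u_0)+\tilde I_\mu(u_n^1)+o(1)$ and $\tilde I_\mu'(u_n^1)\to0$, where $\tilde I_\mu$ is $I_\mu$ with $\Omega$ replaced by $\Rn$ (legitimate, extending $u_n^1$ by $0$).

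\emph{Stage 2 (one bubble; the $\ba=0$ vs. $\ba>0$ dichotomy).} If $u_n^1\to0$ in $\Huno$ we are done with $n_1=n_2=0$; otherwise $c:=d-I_\mu(u_0)>0$ is a noncompact level. Using Lions‑type concentration functions one finds centers $y_n$ and scales $\la_n\to\infty$ capturing a fixed small amount $\delta$ of the critical mass $\Iom|x|^{-\ba}|u_n^1|^{\qb}$, and one may take $y_n=0$ whenever $\la_n|y_n|$ stays bounded. Rescaling $\tilde u_n(x):=\la_n^{-(N-4)/2}u_n^1(y_n+\la_n^{-1}x)$ produces a bounded sequence in $D^{2,2}(\Rn)$ with $\tilde u_n\deb\bar U\neq0$, and the decisive identities are $\Iom|x|^{-4}|u_n^1|^2\,dx=\int_{\Rn}|\la_n y_n+x|^{-4}|\tilde u_n|^2\,dx$ and $\Iom|x|^{-\ba}|u_n^1|^{\qb}\,dx=\int_{\Rn}|\la_n y_n+x|^{-\ba}|\tilde u_n|^{\qb}\,dx$. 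Hence if $\la_n|y_n|$ is bounded the weak limit solves the full equation \eqref{V} and $\bar U=V^j$; if $\la_n|y_n|\to\infty$ the Rellich term drops out, so for $\ba=0$ one gets $\Delta^2\bar U=|\bar U|^{\starstar-2}\bar U$ and $\bar U=W^k$, while for $\ba>0$ the nonlinearity also drops out, forcing $\Delta^2\bar U=0$ with $\bar U\in D^{2,2}(\Rn)$, hence $\bar U=0$ — a contradiction. This is exactly why no $w$‑type profile can occur when $\ba>0$. With a suitable $\bar\la_n$, $\bar\la_n/\la_n\to0$, the truncations $\phi(\bar\la_n x)\la_n^{(N-4)/2}V^j(\la_n x)$, resp. $\phi(\bar\la_n x)\la_n^{(N-4)/2}W^k(\la_n(x-x_n))$, of the form \eqref{ps-type1}--\eqref{ps-type2} differ from the genuine rescaled profiles by $o(1)$ in $\Huno$, using the known decay at infinity of $D^{2,2}(\Rn)$‑solutions.

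\emph{Stage 3 (iteration, quantization, obstacle).} Subtracting the extracted bubble yields a remainder $u_n^2\deb0$ which is again a PS sequence for $\tilde I_\mu$ (resp. $I_\mu$) at the level $c$ minus the energy of that bubble, with $\|u_n^2\|^2=\|u_n^1\|^2-\|(\text{bubble})\|^2+o(1)$, and the asymptotic orthogonality of successive profiles follows because each remainder is $o(1)$ at the scales of the previously extracted bubbles. Since every nonzero solution of \eqref{V} or of $\Delta^2W=|W|^{\starstar-2}W$ has energy bounded below by a fixed positive constant — the least noncompact PS level, of order $(S_\mu^{\ba})^{N/4}$ and, when $\ba=0$, also $(S_0^0)^{N/4}$ — the energy of the remainder drops by at least that amount at each step, so the process stops after finitely many steps with a strongly convergent remainder; collecting the profiles gives (1) and (2). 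I expect the main obstacle to be precisely Stage 2: keeping the rescalings bounded, showing their weak limits are nonzero and solve the \emph{correct} limiting equation, and, above all, choosing the concentration scale and center so that the case ``$\la_n|y_n|$ bounded'' (a bubble sitting on the singularity, which must carry the Rellich term) is cleanly separated from ``$\la_n|y_n|\to\infty$'' (a bubble that does not see it) — the term $\mu\Iom|x|^{-4}|u|^2$, which does not simply vanish in the limit, being the new feature absent from the pure biharmonic Sobolev problem.
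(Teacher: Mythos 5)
Your overall strategy coincides with the paper's: a Struwe-type profile decomposition, iterating a single-bubble extraction until a compact remainder is reached, with the $\ba>0$ versus $\ba=0$ dichotomy governed by which symmetries survive. Stages 1 and 3 are sound and essentially reproduce the paper's weak-limit extraction, Brezis--Lieb energy splitting, and the quantization/termination argument of Step 2 together with Lemma \ref{l:ps}. The genuine gap is in Stage 2 — exactly where you flag the main obstacle — and it is not a matter of missing detail but of a logically incomplete step. You posit centers $y_n$ and scales $\la_n$ and assert that the rescaled sequence $\tilde u_n\deb\bar U\neq0$, but you give no argument for the nonvanishing of this weak limit. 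In the paper the first rescaling is \emph{always} centered at the origin, with $\la_n$ chosen from the weighted concentration function $Q_n(r)=\int_{B_r(0)}|x|^{-\ba}|u_n|^{\qb}dx$, and the resulting weak limit $v_0$ genuinely \emph{can} be zero (Case 2). For $\ba>0$ the paper rules this out by the $\va^2v_n$ test-function estimate leading to \eqref{G}, combined with the subcriticality of $\qb$ and Rellich's compactness theorem, to conclude $\int_K|x|^{-\ba}|v_n|^{\qb}dx=o(1)$ on every compact $K$, contradicting the normalization \eqref{F}. Your substitute — ``$\la_n|y_n|\to\infty$ with $\ba>0$ forces $\Delta^2\bar U=0$, hence $\bar U=0$, contradiction'' — only forbids a translated bubble \emph{with a nonzero limit}; it does not show that the limit is nonzero to begin with, so it never actually produces the $v$-type profile when $\ba>0$. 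It is also not clear how you intend to define a translation-indexed concentration function in the presence of the weight $|x|^{-\ba}$, $\ba>0$, since the sup over centers is then always attained at the singularity.

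For $\ba=0$ the paper's argument is genuinely two-stage: the origin-centered dilation may leave $v_0=0$, and only then does it apply the concentration-compactness principle to \eqref{F} to locate Dirac masses on the sphere $|x|=1$, extract a second translation-dilation $z_n(x)=s_n^{(N-4)/2}v_n(s_nx+q_n)$, prove $z\neq0$ by rerunning the $\va^2$-estimate around an arbitrary fixed center, and use the Pohozaev nonexistence result on a half-space to force $\lim\tilde\la_n\,\mathrm{dist}(\tilde x_n,\pa\tilde\Ome_n)=\infty$. Your single rescaling collapses these two steps; making it rigorous would require precisely the missing nonvanishing argument (and, implicitly, the ruling out of the half-space limit, which you do not mention). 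So the proposal is a correct blueprint at the level of architecture, but the central extraction lemma — showing the rescaled weak limit is nontrivial and identifying which limiting PDE it solves — is left unproved.
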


{\bf Remark:} The case $\mu=0=\ba$ has been studied in \cite{GGS}, where the same conclusion of Theorem \ref{ps-theorem} holds with $V^k=0$ for all $k$.

We prove a lemma first.
\begin{Lemma}\label{l:ps}
Let $u_n$ be a PS sequence for $ I_{\mu}$ at a level $d<\frac{4-\ba}{2(N-\ba)}\left(S_{\mu}^{\ba}\right)^\frac{N-\ba}{4-\ba}$, where $S_{\mu}^{\ba}$ is defined as in \eqref{S-mu}.  Then $u_n$ is relatively compact in $\Huno$.\\
\end{Lemma}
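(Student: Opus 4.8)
The plan is the standard argument showing that a Palais--Smale sequence is relatively compact once its energy lies below the first nontrivial critical level. Throughout write $\|u\|_\mu^2:=\Iom\big(|\Delta u|^2-\mu|u|^2/|x|^4\big)$, which by \eqref{Rellich} and $\mu<\bar\mu$ is an equivalent Hilbert norm on \Huno. First I would show $\{u_n\}$ is bounded: from $I_\mu(u_n)\to d$ and $|\langle I_\mu'(u_n),u_n\rangle|\le\|I_\mu'(u_n)\|_{H^{-2}}\|u_n\|_\mu=o(1)\|u_n\|_\mu$ one gets
$$\Big(\tfrac12-\tfrac1\qb\Big)\|u_n\|_\mu^2=I_\mu(u_n)-\tfrac1\qb\langle I_\mu'(u_n),u_n\rangle=d+o(1)+o(1)\|u_n\|_\mu,$$
and since $\ba<4$ forces $\qb>2$ and $\tfrac12-\tfrac1\qb=\tfrac{4-\ba}{2(N-\ba)}>0$, we conclude $\sup_n\|u_n\|_\mu<\infty$. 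Up to a subsequence $u_n\deb u_0$ in \Huno, $u_n\to u_0$ in $L^2(\Omega)$ by the compactness of $\Huno\hookrightarrow L^2(\Omega)$, and $u_n\to u_0$ a.e.

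Next I would check that $u_0$ is a weak solution of \eqref{eq:problem}. For fixed $\phi\in\Huno$ the linear part $\Iom(\Delta u_n\Delta\phi-\mu u_n\phi/|x|^4)$ converges to the same expression with $u_0$: the Hardy--Rellich piece because $u_n/|x|^2$ is bounded in $L^2(\Omega)$ by \eqref{Rellich} and converges a.e.\ to $u_0/|x|^2$, hence weakly in $L^2$. The critical piece $\Iom|u_n|^{\qb-2}u_n\phi/|x|^{\ba}$ converges to $\Iom|u_0|^{\qb-2}u_0\phi/|x|^{\ba}$ because $\{u_n\}$ is bounded in $L^\qb(\Omega,|x|^{-\ba}dx)$ by \eqref{CKN}, so $|u_n|^{\qb-2}u_n$ is bounded in $L^{\qb/(\qb-1)}(\Omega,|x|^{-\ba}dx)$ and converges a.e.\ to $|u_0|^{\qb-2}u_0$, hence weakly, and pairing with $\phi\in L^\qb(\Omega,|x|^{-\ba}dx)$ passes to the limit. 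Thus $\langle I_\mu'(u_0),\phi\rangle=0$ for every $\phi$; in particular $\langle I_\mu'(u_0),u_0\rangle=0$, so $I_\mu(u_0)=\big(\tfrac12-\tfrac1\qb\big)\|u_0\|_\mu^2\ge0$.

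Then, setting $v_n:=u_n-u_0\deb0$, I would split the functionals. The quadratic part splits by orthogonality, $\|u_n\|_\mu^2=\|v_n\|_\mu^2+\|u_0\|_\mu^2+o(1)$, since the cross term $\Iom(\Delta v_n\Delta u_0-\mu v_n u_0/|x|^4)\to0$ (first factor because $v_n\deb0$ in \Huno, second because $v_n/|x|^2\deb0$ in $L^2$ as above). The critical part splits by the Brezis--Lieb lemma, applied with the finite measure $|x|^{-\ba}dx$ (finite as $\ba<N$) and $u_n\to u_0$ a.e.: $\Iom|u_n|^\qb/|x|^{\ba}=\Iom|v_n|^\qb/|x|^{\ba}+\Iom|u_0|^\qb/|x|^{\ba}+o(1)$. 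Combining these with $\langle I_\mu'(u_n),u_n\rangle=o(1)$ and $\langle I_\mu'(u_0),u_0\rangle=0$ yields both $I_\mu(u_n)=I_\mu(u_0)+\big(\tfrac12\|v_n\|_\mu^2-\tfrac1\qb\Iom|v_n|^\qb/|x|^{\ba}\big)+o(1)$ and $\|v_n\|_\mu^2-\Iom|v_n|^\qb/|x|^{\ba}=o(1)$.

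Finally, passing to a further subsequence so that $\|v_n\|_\mu^2\to\ell$ and hence $\Iom|v_n|^\qb/|x|^{\ba}\to\ell$: if $\ell=0$ then $v_n\to0$ in \Huno, so $u_n\to u_0$ strongly and we are done. If $\ell>0$, the definition \eqref{S-mu} gives $\|v_n\|_\mu^2\ge S_\mu^{\ba}\big(\Iom|v_n|^\qb/|x|^{\ba}\big)^{2/\qb}$, so in the limit $\ell\ge S_\mu^{\ba}\ell^{2/\qb}$, i.e.\ $\ell\ge(S_\mu^{\ba})^{\qb/(\qb-2)}=(S_\mu^{\ba})^{(N-\ba)/(4-\ba)}$; then, using $I_\mu(u_0)\ge0$,
$$d=\lim_n I_\mu(u_n)=I_\mu(u_0)+\Big(\tfrac12-\tfrac1\qb\Big)\ell\ge\frac{4-\ba}{2(N-\ba)}\big(S_\mu^{\ba}\big)^{(N-\ba)/(4-\ba)},$$
contradicting the hypothesis on $d$; hence $\ell=0$. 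The step I expect to be the main obstacle is the weighted Brezis--Lieb splitting together with the weak convergence $|u_n|^{\qb-2}u_n\deb|u_0|^{\qb-2}u_0$ in the dual weighted space, precisely because the embedding $\Huno\hookrightarrow L^\qb(\Omega,|x|^{-\ba}dx)$ is not compact — this noncompactness being exactly what may prevent PS sequences from converging.
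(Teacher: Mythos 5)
Your proof is correct and follows essentially the same route as the paper: boundedness of the sequence, weak limit being a critical point with nonnegative energy, Brezis--Lieb splitting of both the Hardy--Rellich and critical terms, and the key identity $\|v_n\|_\mu^2-\int_\Omega|v_n|^{q_\beta}|x|^{-\beta}\,dx=o(1)$. The only cosmetic difference is in the final step: the paper converts the uniform energy bound into a uniform contraction $\int_\Omega|v_n|^{q_\beta}|x|^{-\beta}\le\delta\|v_n\|_\mu^2$ with $\delta<1$ and concludes directly, whereas you pass to a subsequential limit $\ell$ and rule out $\ell>0$ via $\ell\ge(S_\mu^\beta)^{(N-\beta)/(4-\beta)}$; both are standard and interchangeable.
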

\begin{proof}
By standard arguments it can be be shown that $u_n$ is bounded  $\Huno$. Therefore up to a subsequence 
$u_n\deb u$ in $\Huno,\; u_n\to u$ in $L^p(\Ome)$ for $p<\starstar$ and pointwise. Thus using Vitaly's convergence theorem we can show that $ I_{\mu}'(u) =0$ and hence
\begin{displaymath}
I_{\mu}(u)=\displaystyle\left(\frac{1}{2}-\frac{1}{\qb}\right)\Iom \frac{|u|^{\qb}}{|x|^{\ba}}dx \geq 0.
\end{displaymath}
Also by Brezis-Lieb lemma \cite{BL} we have
\begin{displaymath}
\Iom \frac{|u_n|^{\qb}}{|x|^{\ba}}dx=\Iom \frac{|u|^{\qb}}{|x|^{\ba}}dx+\Iom \frac{|u_n-u|^{\qb}}{|x|^{\ba}}dx+o(1), 
\end{displaymath}
\begin{displaymath}
 \Iom\frac{u_n^2}{|x|^4}dx=\Iom\frac{u^2}{|x|^4}dx+\Iom\frac{|u_n-u|^2}{|x|^4}dx+o(1).
\end{displaymath}
Hence, \begin{displaymath}
I_{\mu}(u_n)=I_{\mu}(u)+I_{\mu}(u_n-u)+o(1).
\end{displaymath}
We define $v_n:=u_n-u$. Consequently,
\begin{displaymath}
I_{\mu}(v_n)=I_{\mu}(u_n)-I_{\mu}(u)+o(1)\leq I_{\mu}(u_n)\leq d<\frac{4-\ba}{2(N-\ba)}\left(S_{\mu}^{\ba}\right)^\frac{N-\ba}{4-\ba}.
\end{displaymath}
We also have
$$ o(1)= \big<I_{\mu}'(u_n)-I_{\mu}'(u),v_n\big> $$
$$ = \int_\Ome\left[|\Delta v_n|^2- \mu\frac{v_n^2}{|x|^4}\right] dx- \int_\Ome \left[(|u_n|^{\qb-2}u_n -|u|^{\qb-2}u)\frac{v_n}{|x|^{\ba}} \right]dx.$$ 
As $\displaystyle\int_\Ome \frac{|u_n|^{\qb-2}u_nu}{|x|^{\ba}} dx\to \int_\Ome \frac{|u|^{\qb}}{|x|^{\ba}}dx$ and $\displaystyle\int_\Ome \frac{|u|^{\qb-2}u u_n}{|x|^{\ba}} dx\to \int_\Ome \frac{|u|^{\qb}}{|x|^{\ba}}dx$, by using Brezis-Lieb Lemma \cite{BL}, we simplify the last integral as
$$ \int_\Ome \left[(|u_n|^{\qb}- |u|^{\qb})\frac{1}{|x|^{\ba}}\right]dx + o(1) = \int_\Ome\frac{|v_n|^{\qb}}{|x|^{\ba}} dx+ o(1). $$
Hence 
\begin{equation}\label{ps-vn}
 \int_\Ome|\Delta v_n|^2dx-\mu\int_\Ome\frac{v_n^2}{|x|^4}dx-\int_\Ome\frac{|v_n|^{\qb}}{|x|^{\ba}} dx=o(1).
\end{equation}
As a result,
$$ I_{\mu}(v_n)=\frac{4-\ba}{2(N-\ba)}\Iom \frac{|v_n|^{\qb}}{|x|^{\ba}}dx\leq
 d<\frac{4-\ba}{2(N-\ba)}\left(S_{\mu}^{\ba}\right)^\frac{N-{\ba}}{4-\ba}.$$
In consequence,
\begin{equation}\label{ps-l}
 \Iom\frac{|v_n|^{\qb}}{|x|^{\ba}} dx\leq \delta \Iom \left(|\Delta v_n|^2-\mu \frac{v_n^2}{|x|^4} \right)dx, \;\; {\rm where}\;\; 0< \delta <1.
\end{equation}
Substituting \eqref{ps-l} in \eqref{ps-vn} we get
$$ (1-\delta)\Iom \left(|\Delta v_n|^2-\mu\frac{v_n^2}{|x|^4} \right) dx= o(1) . $$  Hence $v_n\to 0$ in $\Huno$ and the lemma follows.
\hfill{$\square$} 
\end{proof}

\vspace{2mm}

{\bf Proof of Theorem \ref{ps-theorem}:}  We break this proof into two steps:

\vspace{2mm}

{\bf Step 1:} Let $u_n$ be a PS sequence for $I_{\mu}$, converging weakly to $0$. We show that, up to a subsequence either $u_n \to 0$ in $\Huno$ or there exists a PS sequence $\tilde u_n$ of $I_{\mu}$ such that $I_{\mu}(u_n) = I_{\mu}(\tilde u_n) +I_{\mu}(u_n-\tilde u_n) +o(1)$, $u_n-\tilde u_n$ is again a PS sequence for $I_{\mu}$ and
$\tilde u_n$ is of the form \eqref{ps-type1} or \eqref{ps-type2}. If $\ba>0$, then $\tilde u_n$ must be of the form \eqref{ps-type1}.\\

\vspace{2mm}

If there does not exist any subsequence of $u_n$ which converges to zero, then in view of Lemma \ref{l:ps}, we may assume that $\liminf\limits_{n \rightarrow \infty}I_{\mu}(u_n) \ge \frac{4-\ba}{2(N-\ba)}\left(S_{\mu}^\ba\right)^\frac{N-\ba}{4-\ba}$. 
Since $u_n\deb 0$ in $H^2_0(\Omega)$, an equality of the form \eqref{ps-vn} can be shown. Hence up to a subsequence
$$\lim\limits_{n \rightarrow \infty} \Iom \frac{|u_n|^{\qb}}{|x|^{\ba}}dx
 \ge \left(S_{\mu}^{\ba}\right)^\frac{N-\ba}{4-\ba}.$$
Let $ Q_n(r)$ denote the concentration function
\begin{displaymath}
 Q_n(r)=\int_{B_r(0)}\frac{|u_n|^{\qb}}{|x|^{\ba}}dx.
\end{displaymath}
Therefore we can choose $\la_n\geq \la_0>0$ such that 
$$Q_n(\la_n) = \int_{B_{\la_n}(0)}\frac{|u_n|^{\qb}}{|x|^{\ba}} dx= \delta ,$$
where $\delta $ is chosen such that $\delta^{\frac{4-\ba}{N-\ba}}<S_{\mu}^{\ba}$. Define 
$$v_n(x)={\la}_n^{-\frac{N-4}{2}}u_n(\frac{x}{\la_n}),\; x \in \Ome_n,$$ 
where $\Ome_n = \{x\in\Rn :\frac{x}{\la_n} \in \Ome\}$ and extend it to all of $\Rn$ by putting $0$ outside $\Ome_n$. Then $v_n \in D^{2,2}(\Rn)$ with $\text{supp}\ v_n \subset \Ome_n$  and satisfies
\begin{eqnarray} \label{F}
\int_{B_1(0)}\frac{|v_n|^{\qb}}{|x|^{\ba}}dx=\delta .
\end{eqnarray}
Since $\| v_n\|_{D^{2,2}(\Rn)}= \|u_n\|_{D^{2,2}(\Ome)}<\infty$, up to a subsequence we may assume $v_n\deb v_0$ in $D^{2,2}(\Rn)$. Now we consider two cases:\\

{\bf Case 1:} $v_0\not= 0$.\\
Here we note that since $\Ome$ is a bounded domain and $u_n\deb 0$ in $H^2_0(\Omega)$, we get the sequence $\la_n\to\infty$ and $\Omega_n\to\Rn$ as $n\to\infty$. For any $\phi\in C_0^{\infty}(\Rn)$, we define $\tilde\phi_n(x)=\la_n^\frac{N-4}{2}\phi(\la_n x)\in\Huno$. Note that $\phi\in C_0^{\infty}(\Omega_n)$ for large $n$. Then a straight forward calculation and the fact that $I'_{\mu}(u_n)\to 0$ in $H^{-2}(\Omega)$  yields to
\begin{equation}\label{new-1}
o(1) = (I_{\mu}'(u_n),\tilde\phi_n)=\displaystyle\int_{\Omega_n}\left[\Delta v_n\Delta\phi-\mu \frac{v_n\phi}{|x|^4}-\frac{|v_n|^{\qb-2}v_n\phi}{|x|^{\ba}}\right]dx.
\end{equation}
Thanks to \eqref{Rellich} and \eqref{CKN} if we take the limit $n\to\infty$, we have, 
\begin{equation}\label{new-1'}
\displaystyle\int_{\Rn}\left[\Delta v_0\Delta\phi-\mu \frac{v_0\phi}{|x|^4}-\frac{|v_0|^{\qb-2}v_0\phi}{|x|^{\ba}}\right]dx=0.
\end{equation}
 Thus $v_0$ solves \eqref{V}.

Let $ \va\in C_0^\infty(\Rn)$ such that $ 0\leq\va\leq 1,\ \ \va\equiv 1$ in $B_1(0),\ \ supp \ \va\subseteq B_2(0)$. Define
\begin{equation}\label{buble-1}
\tilde u_n(x)= {\la}_n^\frac{N-4}{2}v_0(\la_n x)\va(\bar\la_n x),
\end{equation}
where $\bar\la_n>0$ is chosen s.t $\tilde\la_n=\frac{\bar\la_n}{\la_n} \to 0$. Thus we have $\bar\la_n\mbox{dist}(0,\pa\Ome_n)\to\infty$ as $n\to\infty$. Clearly $\tilde u_n$ is a PS sequence of the form \eqref{ps-type1}. Next we prove the splitting of energy.  By applying  Brezis-Lieb Lemma we see that $I_{\mu}(u_n)$
can be written as
\begin{eqnarray*}
I_{\mu}(u_n)=I_{\mu}(v_n)&=&\frac{1}{2}\int\limits_{\Rn}\left[|\Delta v_n|^2- \mu\frac{v_n^2}{|x|^4}\right]dx -\frac{1}{\qb}\int_{\Rn}\frac{|v_n|^{\qb}}{|x|^{\ba}}dx\\
&=&\frac{1}{2}\int\limits_{\Rn}\left[|\Delta v_0|^2dx- \mu\frac{v_0^2}{|x|^4}\right]dx -\frac{1}{\qb}\int_{\Rn}\frac{|v_0|^{\qb}}{|x|^{\ba}}dx\\
&+& \frac12\int\limits_{\Rn}\left[|\Delta (v_n- v_0)|^2- \mu\frac{(v_n-v_0)^2}{|x|^4}\right]dx \\
&-&\frac{1}{\qb}\int\limits_{\Rn}\frac{|v_n-v_0|^{\qb}}{|x|^{\ba}} dx+o(1). 
\end{eqnarray*}
Next, we define $\va_n(x)=\va(\tilde\la_nx)$, then
\begin{eqnarray*}
\int_{\Rn}|\Delta(v_0\va_n-v_0)|^2 &\leq&  C\int_{\Rn}|\Delta v_0|^2(\va_n-1)^2 +
C\int_{\Rn}|v_0|^2|\Delta\va_n|^2 +C\int_{\Rn}|\na v_0|^2|\na\va_n|^2\\
& \leq& 
 C\int\limits_{|x|> \frac{1}{\tilde\la_n}}|\Delta v_0|^2 + C\big(\int\limits_{\frac{1}{\tilde\la_n} < |x| <\frac{2}{\tilde\la_n}}|v_0|^{\starstar}\big)^{2/\starstar}+C\int\limits_{\frac{1}{\tilde\la_n} < |x| < \frac{2}{\tilde\la_n}}|\na v_0|^2.
\end{eqnarray*}
As $v_0\in D^{2,2}(\Rn)$ implies that $\na  v_0\in D^{1,2}(\Rn)$, from the above relation we obtain  $v_0\va_n\to v_0$ in $D^{2,2}(\Rn)$. Hence 
\begin{eqnarray*}
I_{\mu}(u_n)&=& \frac{1}{2}\int\limits_{\Rn}\left[|\Delta (\va_nv_0)|^2- \mu\frac{(\va_nv_0)^2}{|x|^4}\right] dx-\frac{1}{\qb}\int\limits_{\Rn}\frac{|\va_nv_0|^{\qb}}{|x|^{\ba}}dx\\
&+& \frac{1}{2}\int\limits_{\Rn}\left[|\Delta (v_n- \va_nv_0)|^2- \mu\frac{(v_n-\va_nv_0)^2}{|x|^4}\right]dx -\frac{1}{\qb}\int\limits_{\Rn}\frac{|v_n-\va_nv_0|^{\qb}}{|x|^{\ba}} dx+o(1).
\end{eqnarray*}

Then by a change of variables we obtain
$I_{\mu}(u_n)=I_{\mu}(\tilde u_n)+I_{\mu}(u_n-\tilde u_n)+o(1) $.\\
Using similar type of arguments we can show $I_{\mu}^\prime(u_n-\tilde u_n)= o(1)$ in $H^{-2}(\Ome)$.

\vspace{2mm}

{\bf Case 2:} $v_0=0$\\

\vspace{2mm}

Let $\va\in C_0^\infty B_1(0)$ with $0\le \va \le 1$. By taking $\va^2v_n$ as a test function in \eqref{new-1}, we have
\begin{equation}\label{phivn}
\int_{\Rn}\left(\Delta v_n\Delta(\va^2v_n)-\mu \frac{(\va v_n)^2}{|x|^4} \right)dx = \int_{\Rn} \frac{|v_n|^{\qb-2}(\va v_n)^2}{|x|^{\ba}}dx + o(1).
\end{equation}
By using Rellich's compactness theorem, we have  $v_n\to 0$ in $L^2_{loc}(\Rn)$ and $D^{1,2}_{loc}(\Rn)$. Therefore, by doing a simple computation the LHS of \eqref{phivn} can be simplified as \\
$\displaystyle\int_{\Rn}\left(|\Delta(\va v_n)|^2-\mu \frac{(\va v_n)^2}{|x|^4} \right) dx+ o(1)$, whereas the RHS can be simplified as
$$\int_{\Rn} \frac{|v_n|^{\qb-2}(\va v_n)^2}{|x|^{\ba}} dx\leq \left(\int_{B_1(0)} \frac{|v_n|^{\qb}}{|x|^{\ba}}dx\right)^{\frac{\qb-2}{\qb}}\left(\int_{\Rn}\frac{|\va v_n|^{\qb}}{|x|^{\ba}}dx\right)^{\frac{2}{\qb}}$$
$$\leq \frac{\delta^{\frac{4-\ba}{N-\ba}}}{S_{\mu}^{\ba}}\int_{\Rn}\left(|\Delta(\va v_n)|^2-\mu \frac{(\va v_n)^2}{|x|^4} \right) dx.$$
Note that by the choice of $\delta$ we had $\delta^\frac{4-\ba}{N-\ba}<S_{\mu}^{\ba}$, as a consequence, from \eqref{phivn} we have 
\begin{equation}\label{G}
\int_{\Rn}\left(|\Delta(\va v_n)|^2-\mu \frac{(\va v_n)^2}{|x|^4} \right) dx= o(1), 
\end{equation}
 which in turn implies  
\begin{equation*} 
 \displaystyle \int_{\Rn}\frac{|\va v_n|^{\qb}}{|x|^{\ba}}dx = o(1) 
 \end{equation*} 
(since a formula similar to \eqref{ps-vn} holds for $\va v_n$ as $ \va v_n\deb0$) .
 Hence 
\begin{equation*}
 \displaystyle\int_{B_r(0)}\frac{| v_n|^{\qb}}{|x|^\ba} dx= o(1)\quad\forall\quad 0<r<1.
 \end{equation*} 
If $\ba>0$, this implies 
\begin{equation}
\displaystyle\int_{K}\frac{| v_n|^{\qb}}{|x|^{\ba}} dx= o(1)
\end{equation}
for any compact set $K \subset \Rn$, which contradicts \eqref{F}. Therefore when $\ba>0$, the Case 2 cannot happen and we are through. Thus we assume now onwards $\ba=0.$ \\\\
The condition \eqref{F} together with the concentration compactness principle \cite{L} (see also \cite{BS}, \cite{GGS}, \cite{Smets}) give
\begin{displaymath}
 |v_n|^{\starstar}dx\big|_{\{|x|\leq 1\}}\deb\sum_j C_{x_j}\delta_{x_j},
\end{displaymath}
where the points $x_j\in\Rn$ satisfy $|x_j|=1$. Let $C=max\{C_{x_j}\}$ and define 
\begin{displaymath}
 \tilde{Q}_n(r)=\sup_{x\in\Rn}\int_{B_r(x)}{|v_n|}^{\starstar}dy.
\end{displaymath}
Clearly, $\tilde{Q}_n(\infty)>\frac{C}{2}$ and for each $r>0$ we have
$ \liminf_{n\to\infty}\tilde{Q}_n(r)>\frac{C}{2}.$
Hence there exists  a sequence $\{q_n\} \subset\Rn$ and $\{s_n\}\subset\R^{+}$ s.t $s_n\to 0$ and  $|q_n|>\frac{1}{2}$ and
\begin{equation}\label{con-2}
 \frac{C}{2}=\sup_{q\in\Rn}\int_{B_{s_n}(q)}| v_n|^{\starstar}dx=\int_{B_{s_n}(q_n)}|v_n|^{\starstar}dx.
\end{equation}
We define $z_n(x)=s_n^\frac{N-4}{2}v_n(s_nx+q_n)=(\frac{s_n}{\la_n})^\frac{N-4}{2}u_n(\frac{s_nx+q_n}{\la_n})$.\\
Therefore upto a subsequence we can assume that $\exists z\;\in D^{2,2}(\Rn)$ s.t $z_n\deb z$ in $D^{2,2}(\Rn)$ and $z_n(x)\to z(x)$ a.e.\\ We claim that $z \not= 0$, otherwise we choose $\va\in C_0^\infty(B_1(x))$ where $0\le \va \le 1$ and $x \in \R^N$  is chosen arbitrarily but fixed. Then proceeding the same way as we established (\ref{G}), we can show that $z_n\to 0$ in $L_{loc}^{\starstar}(\Rn)$ which contradicts \eqref{con-2}. \\
we also observe that 
$$ z_n(x)=(\tilde\la_n)^\frac{N-4}{2}u_n(\tilde\la_n x+\tilde x_n), $$ where $\tilde\la_n=\frac{s_n}{\la_n}=o(1)$ and $\tilde x_n=\frac{q_n}{\la_n}$. We have $\frac{\tilde\la_n}{|\tilde x_n|}<2s_n=o(1)$.\\\\
Support of $z_n \subset \tilde\Ome_n := \{x \in\Rn: \tilde\la_n x+ \tilde x_n \in \Ome \}$ and $\tilde\Ome_n$ exhausts as $\tilde\Ome_\infty$ which is either a half space or $\Rn$ depending whether $\lim \tilde\la_n {\text dist}(\tilde x_n, \partial \tilde\Ome_n)$ is finite or infinite.
Since $\{u_n\}$ is a PS sequence,  we have
\begin{displaymath}
 \Iom\Delta u_n\Delta\va dx-\mu\Iom\frac{u_n\va}{|x|^4}dx-\Iom |u_n|^{\starstar-2}u_n\va dx=o(\|\va\|) \ \
 \mbox{$\forall\ \va\in\Huno$}.
\end{displaymath}
Let $\psi \in C_0^\infty(\tilde\Ome_\infty)$.We choose $\va=\va_n $ as $\va_n(x)= \tilde\la_n^{-\frac{N-4}{2}}\psi\displaystyle\left(\frac{x-\tilde{x}_n}{\tilde\la_n}\right)$ in the above relation, then a change of variables together with the fact that $ \|\va_n\| = \|\psi\|$ leads to
$$
\int_{\Omega_n}\Delta z_n\Delta\psi dx-\mu\int_{\Omega_n}\frac{z_n\psi}{|x+\frac{\tilde  x_n}{\tilde\la_n}|^4}dx
=\int_{\Omega_n}|z_n|^{\starstar-2}z_n\psi dx+o(\|\psi\|).
$$
As a result, taking the limit as $n \to \infty$ and using 
 $\frac{\tilde{x}_n}{\tilde{\la_n}}\to\infty$ we find
$$
\int_{\Omega_{\infty}}\Delta z\Delta\psi dx=\int_{\Omega_{\infty}}|z|^{\starstar-2}z\psi dx.$$
However by the well known Pohazaev nonexistence result, this is not possible when $\tilde\Ome_\infty$ is a half space (see \cite{GGS}). Therefore we get $\lim \tilde\la_n {\text dist}(\tilde x_n, \partial \tilde\Ome_n) = \infty$
Now we define 
\begin{equation}\label{buble-2}
\tilde u_n(x)= \tilde\la_n^{-\frac{N-4}{2}}z\displaystyle\left(\frac{x-\tilde{x}_n}{\tilde \la_n}\right)\va\big(\bar\la_n(x-\tilde{x}_n)\big) ,
\end{equation}
where $\va$ is as in \eqref{buble-1}, $\bar\la_n$ is chosen s.t $\tilde \la_n\bar\la_n \to 0$ and $\bar\la_n\mbox{dist}(\tilde{x}_n,\pa\Ome)\to\infty$ as $n\to\infty$. Proceeding exactly as in the case of \eqref{buble-1}, we see that 
$\tilde u_n, u_n-\tilde u_n$ are PS sequences and $I_{\mu}(u_n) = I_{\mu}(\tilde u_n) + I_{\mu}(u_n-\tilde u_n) +o(1).$ 

\vspace{2mm}

{\bf Step 2:} In this step we complete the proof of the theorem. If  $d<\frac{4-\ba}{2(N-\ba)}\left(S_{\mu}^\ba\right)^\frac{N-\ba}{4-\ba}$, then we are done. Otherwise, since $\{u_n\}$ is a PS sequence at level $d$, $\{u_n\}$ is bounded in $\Huno$ and hence we may assume $u_n$ converges weakly to $u \in \Huno$. Using standard arguments it is not difficult to see that $u_n-u$ is a PS sequence converging weakly to zero and $d = I_{\mu}(u) + I_{\mu}(u_n-u) +o(1)$. Now either $u_n-u$ is a PS sequence at a level $\tilde d<\frac{4-\ba}{2(N-\ba)}\left(S_{\mu}^\ba\right)^\frac{N-\ba}{4-\ba}$, or we can find a $\tilde u_n $ as in Step 1. We observe that $ I_{\mu}(\tilde u_n) $ converges either to $I_{\mu}(V)$ or $I_0(W)$ where $V$ and $W$ are as in \eqref{ps-type1} and \eqref{ps-type2} 
and $I_{\mu}(V), I_0(W) \geq C_1 >0$. Therefore in finitely many steps we get a PS sequence at a level strictly less than $\frac{4-\ba}{2(N-\ba)}\left(S_{\mu}^\ba\right)^\frac{N-\ba}{4-\ba}$ and this completes the proof.
\hfill$\square$

\section{Existence result}
In this section we prove the existence result in bounded domains which have non trivial topology in the spirit of \cite{HS} when $\ba=0$.  More precisely, here we consider the problem:
\begin{equation}
\label{eq:problem1}
\begin{cases}
  \Delta^2u-\mu\frac{u}{|x|^4}=|u|^{2^{**}-2}u &\textrm{in $\Omega$},\\
u\in  H^2_0(\Omega),\\
u>0 \quad\text{in}\ \Omega.
\end{cases}
\end{equation}
For the 2nd order equation: $-\Delta u=|u|^{\frac{4}{N-2}}u, \quad u\in H^1_0(\Omega)$, existence of positive solutions in domains with non trivial topology  was first studied by Coron \cite{Cor}.  In \cite{HS}, existence result was studied for the 2nd order version of the problem \eqref{eq:problem1}.

Now we state the main result of this section.

\begin{Theorem}\label{t:existence}
Let $\Omega$ be a bounded domain which is not contractible, $0\in\Omega$ and $N\geq 8$. Then there exists $\mu_0\in(0,\bar\mu)$ such that $\forall\ \mu\in(0,\mu_0)$ there exists a solution to \eqref{eq:problem1}.
\end{Theorem}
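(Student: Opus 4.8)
The plan is to adapt Coron's topological argument, as refined by Hebey–Saha for the second-order version, to our fourth-order operator with the Rellich potential. Fix the energy level
$$
c^* = \frac{2}{N}S^{N/4},
$$
where $S = S_0^0$ is the best Sobolev constant for $D^{2,2}(\Rn)$ (attained by the Talenti-type instanton $W$). By Lemma \ref{l:ps} and Theorem \ref{ps-theorem}, the functional $I_\mu$ restricted to the Nehari manifold satisfies the Palais–Smale condition below $c^*$: indeed, the only way a PS sequence at level $d<c^*$ can fail to converge is to carry a bubble of the form \eqref{ps-type1} or \eqref{ps-type2}, but each such bubble contributes energy $I_\mu(V)\geq C_1$ or $I_0(W)=\tfrac{2}{N}S^{N/4}=c^*$; since $\mu>0$ makes $I_\mu(V)>0$ strictly and the $W$-bubble already exhausts $c^*$, no decomposition with $d<c^*$ is possible. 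The strategy is then to show that $I_\mu$ has a critical value in the window $(c^*, 2c^*)$ by a minimax over a class of maps detecting the nontrivial topology of $\Omega$, and to argue that the associated critical point is positive.

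The key steps, in order: (i) Choose $\mu_0<\bar\mu$ small enough that the ``energy cost'' of the Rellich-potential bubble at the origin, namely $I_\mu(V)$ for $V$ solving \eqref{V} with $\ba=0$, stays close to $c^*$; more precisely one shows $I_\mu(V)\to c^*$ as $\mu\to 0^+$, so that for $\mu<\mu_0$ both types of single bubbles have energy in $[c^*-o(1), c^*+o(1)]$ while any sum of two or more bubbles has energy $\geq 2c^*-o(1)$. Thus the only obstruction to compactness in the band $(c^*, 2c^*-\eta)$ is a single bubble. (ii) Build a family of test functions: truncated, rescaled instantons $W$ centered at points $y$ on a sphere $S_\rho\subset\Omega\setminus\{0\}$ that is not contractible in $\Omega$ (this sphere exists since $\Omega$ is not contractible and $0\in\Omega$), with concentration parameter $\var\to 0$. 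Show $\sup I_\mu$ over this family is $<2c^*$ when $N\geq 8$ — the dimension restriction $N\geq 8$ is exactly what is needed for the interaction/error estimates between the bubble, the boundary cut-off, and the potential term $\mu|x|^{-4}|u|^2$ to be of lower order than the gap. (iii) Use the barycenter/center-of-mass map $\beta(u)=\int x\,|u|^{\starstar}\!\big/\!\int |u|^{\starstar}$, defined on the sublevel set $\{I_\mu\leq 2c^*-\eta\}$, to detect that a would-be minimizing configuration must concentrate near $S_\rho$; composing the test-function map with $\beta$ gives a map homotopic to the inclusion $S_\rho\hookrightarrow\Omega$, which is nontrivial, so the minimax value over maps extending this family cannot be achieved at a trivial (single-bubble-at-a-point) configuration. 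Hence there is a genuine critical point $u$ with $c^*<I_\mu(u)<2c^*-\eta$. (iv) Positivity: replace $I_\mu$ by the functional with $|u|^{\starstar}$ and work on $\{u\geq 0\}$, or apply the maximum principle machinery for the clamped-plate operator away from the origin (the solution is smooth away from $0$ by \cite{GGS}) together with a positivity-preserving argument near $0$ using $\mu<\bar\mu$; then $I_\mu'(u)=0$ and $u>0$ solve \eqref{eq:problem1}.

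The main obstacle I expect is step (ii) combined with the compactness bookkeeping of step (i): one must carefully estimate the energy of the truncated bubble $\var^{-(N-4)/2}W(\cdot/\var)\,\varphi$ placed at $y\in S_\rho$, including the extra term $-\tfrac{\mu}{2}\int |x|^{-4}|u|^2$, and show the maximum of $I_\mu$ over the natural $(k+1)$-dimensional family stays strictly below $2c^*$. The potential term is actually helpful (it lowers the energy when the bubble is near the origin and is negligible when it is far, since $|y|\geq \rho>0$), but one needs the interaction estimate between the cut-off error and the nonlinearity to be $O(\var^{N-4})$ or smaller against a gap of order $1$, and it is precisely the range $N\geq 8$ that guarantees the critical term $\var^{(N-4)/2}$ or $\var^{N-4}$ does not dominate. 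A secondary delicate point is verifying that $I_\mu(V)\to c^*$ as $\mu\to0^+$ for the Rellich-bubble $V$; this follows from a continuity/variational-comparison argument for $S_\mu^0$ using \eqref{Rellich}, but must be stated carefully since $S_\mu^0$ is never attained on a bounded domain.
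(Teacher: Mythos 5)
Your proposal takes the classical Coron route (a linking/minimax in the energy window $(c^\ast,2c^\ast)$ with $c^\ast=\frac{2}{N}S_0^{N/4}$), but the paper proves the theorem by a Hirano--Shioji--type argument that works entirely \emph{below} $c^\ast$, using a pseudo-gradient flow on the Nehari manifold combined with a barycenter map. The structural reason the paper can do this is the strict inequality $\frac{2}{N}S_\mu^{N/4}<\frac{2}{N}S_0^{N/4}=c^\ast$ for $\mu>0$: a properly truncated $u^0_{z,\epsilon}$ centered anywhere in $\Omega^i_{d_2}$ has $I_\mu$-energy below $c^\ast$ once $N\geq 8$, because the negative Rellich term $-\frac{\mu}{2}\int|x|^{-4}|v_{z,\epsilon}|^2\sim-\frac{\mu\lambda}{2}k\epsilon^4$ beats the truncation error $O(\epsilon^{N-4})$ (this is exactly Lemma~\ref{l:estimates_v}). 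The flow then drives such a test family toward a noncompact PS sequence, which Proposition~\ref{p:PS}(i) forces to be a single origin-centered $V$-bubble, so the center of mass $F(\phi(t,v))\to 0$; but Lemma~\ref{center of mass} keeps $F(\phi(t,v))\in\Omega_{d_1}$, yielding a contraction of $\Omega^i_{d_2}\cong\Omega$ in $\Omega_{d_1}\cong\Omega$, contradiction.

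There is a genuine error in your step (i) that would derail your version. You assert that ``the functional $I_\mu$ restricted to the Nehari manifold satisfies the Palais--Smale condition below $c^\ast$'' because ``each such bubble contributes energy $I_\mu(V)\geq C_1$ or $I_0(W)=c^\ast$ \dots no decomposition with $d<c^\ast$ is possible.'' This is false. By Lemma~\ref{l:ps}, PS holds only below $\frac{2}{N}S_\mu^{N/4}$, which is \emph{strictly less} than $c^\ast$ when $\mu>0$. A PS sequence with $u_0=0$ and a single $V$-bubble has level exactly $I_\mu(V)=\frac{2}{N}S_\mu^{N/4}\in(0,c^\ast)$, so compactness fails in the band $[\frac{2}{N}S_\mu^{N/4},c^\ast)$. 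Far from being an obstacle, this is the mechanism the paper exploits: the only noncompact behavior below $c^\ast$ is concentration at the origin, which pins down $F\to 0$. Your statement that ``$I_\mu(V)\to c^\ast$ as $\mu\to0^+$'' is correct but the key role of $\mu_0$ in the paper is different: $\mu_0<\mu'$ is chosen (Lemma~\ref{center of mass} and \eqref{mu-dash}) so that $2^{4/N}S_\mu>S_0$ (ruling out two $V$-bubbles below $c^\ast$) and so that the barycenter of low-energy Nehari points stays in $\Omega_{d_1}$. A minimax above $c^\ast$ would additionally have to rule out a decomposition $I_\mu(u_0)+I_\mu(V)$ with $u_0\neq0$, and the barycenter bookkeeping becomes considerably messier; it is not needed here. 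You correctly identified that the Rellich potential lowers the energy and that $N\geq 8$ enters via $\epsilon^4$ versus $\epsilon^{N-4}$, but the payoff of those observations is the sub-$c^\ast$ estimate that makes the flow argument close, not a linking level above $c^\ast$.
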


In order to prove this theorem, we need to introduce some notations and recall some previous results.

\vspace{2mm}

{\bf Notation:} If $A$ and $B$ are two subsets of $\Rn$, by $A\cong B$ we mean $A$ and $B$ are homotopy equivalent. $\Omega_d=\{x\in\Rn: dist(x,\Omega)<d\}$ and $\Omega_d^i=\{x\in\Omega: dist(x,\partial\Omega)>d\}$. When $\ba=0$, we simply denote $S_{\mu}^{\ba}$ as $S_{\mu}$.

\vspace{2mm}

From \cite{BM} it is been known that \eqref{eq:problem1} has a unique radial solution say,  $u^{\mu}_0$ when $\Omega=\Rn$. Furthermore, in \cite{BM} it is shown that any ground state solution to equation \eqref{eq:problem1} is radially symmetric. Therefore $u_0^{\mu}$ is only possible ground state solution. Let $u_0^0$ denote the unique solution of \eqref{eq:problem1} in $\Rn$ when $\mu=0$. For $\epsilon>0$ and $z\in\Rn$,  we define
$$u^{\mu}_{z,\epsilon}=\epsilon^{-(\frac{N-4}{2})}u^{\mu}_0\displaystyle\left(\frac{x-z}{\epsilon}\right).$$
Then for each $\epsilon>0$, $u^{\mu}_{0,\epsilon}$ is a solution to \eqref{eq:problem1} in $\Rn$ with the energy level $I_{\mu}(u^{\mu}_{0,\epsilon})=\frac{2}{N}S_{\mu}^{\frac{N}{4}}$ and $u^{0}_{z,\epsilon}$ is a solution to \eqref{eq:problem1} in $\Rn$ with $\mu=0$ and with the same energy level $I_{0}(u^{0}_{z,\epsilon})=\frac{2}{N}S_{0}^{\frac{N}{4}} \quad\forall\quad z\in\Rn \quad\text{and}\quad\quad\forall \epsilon>0$. 
 
Clearly by definition of $S_{\mu}$, we have $\frac{2}{N}S_{\mu}^{\frac{N}{4}}<\frac{2}{N}S_{0}^{\frac{N}{4}}$ when $\mu>0$. We define the Nehari Manifold $N_{\mu}$ as follows:
$$N_{\mu}=\displaystyle\left\{u\in D^{2,2}(\Rn)\setminus\{0\}: \int_{\Rn}\displaystyle\left[|\Delta u|^2-\mu\frac{u^2}{|x|^4}\right]dx=\int_{\Rn}|u^{\starstar}|dx\right\}.$$
We set $N_{\mu}(\Omega)=N_{\mu}\cap H^2_0(\Omega)$. Now we define,
$$\ga(u)=\displaystyle\left[\frac{\displaystyle\int_{\Rn}\displaystyle\left(|\Delta u|^2-\mu\frac{u^2}{|x|^4}\right)dx}{\displaystyle\int_{\Rn}|u^{\starstar}|dx}\right]^\frac{N-4}{8}, \quad\forall\quad u\in D^{2,2}(\Rn)\setminus \{0\}.$$ Then a simple calculation shows that $\quad\forall\ u\in D^{2,2}(\Rn)\setminus\{0\}$ we have $\ga(u).u\in N_{\mu}$.

Note that since $S_{\mu}\longrightarrow S_0$ as $\mu\longrightarrow 0$, we can choose $\mu'\in(0,\bar\mu)$ such that
\begin{equation}\label{mu-dash}
2^\frac{4}{N}S_{\mu}>S_0 \quad\forall\quad \mu\in(0,\mu').
\end{equation}

\vspace{2mm}

We break the proof of Theorem \ref{t:existence} into several lemmas and propositions.

\begin{Proposition}\label{p:PS}
Assume that there does not exist any solution to problem \eqref{eq:problem1}.

\vspace{2mm}

(i) Set $\mu\in(0,\mu')$ as in \eqref{mu-dash}. If $\{u_n\}\subset N_{\mu}(\Omega)$ is a PS sequence  of $I_{\mu}$ at level $d<\frac{2}{N}S_0^\frac{N}{4}$, then there exists a sequence $\{\epsilon_n\}\in\R^{+}$ such that
$ \epsilon_n\to 0$ and $||u_n-u^{\mu}_{0,\epsilon_n}||\to 0$. 

(ii)Set $\mu=0$. If $\{u_n\}\in\Huno$ verifies
$$I_0(u_n)\leq\frac{2}{N}S_0^\frac{N}{4} \quad\text{and}\quad \lim_{n\to\infty}|\Delta u_n|^2_{L^2(\Omega)}=\lim_{n\to\infty}|u_n|^{\starstar}_{L^{\starstar}(\Omega)},$$
then there exists a sequence $\{z_n\}\subset\Rn$ and $\{\epsilon_n\}\subset\R^{+}$ such that $\epsilon_n\to 0$ and \\ $||u_n-u^{0}_{z_n,\epsilon_n}||\to 0$. 
\end{Proposition}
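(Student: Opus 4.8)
The plan is to prove each part by a concentration–compactness argument, using the Palais–Smale classification of Theorem \ref{ps-theorem} as the main engine. The underlying idea is that the threshold $\frac{2}{N}S_0^{N/4}$ is too low to accommodate a nontrivial bubble of the $W$-type (which carries energy $I_0(W)\geq \frac{2}{N}S_0^{N/4}$, since $S_0$ is the optimal Sobolev/CKN constant with $\mu=0$), so any concentration must occur along the $V$-profile, and in fact along a single copy of the ground state $u_0^\mu$.

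For part (i): by standard arguments a PS sequence $\{u_n\}$ for $I_\mu$ at level $d$ is bounded in $\Huno$, so Theorem \ref{ps-theorem} applies (with $\ba=0$) and gives the decomposition $u_n = u_0 + \sum_{j=1}^{n_1} v_n^j + \sum_{k=1}^{n_2} w_n^k + o(1)$ together with the energy splitting $d = I_\mu(u_0) + \sum_j I_\mu(V^j) + \sum_k I_0(W^k)$. Since we are assuming \eqref{eq:problem1} has no solution and $I_\mu'(u_0)=0$, necessarily $u_0 \equiv 0$. Each $V^j$ is a nontrivial solution of \eqref{V} with $\ba=0$, hence $I_\mu(V^j)\geq \frac{2}{N}S_\mu^{N/4}$ (the ground state value, realized by $u_0^\mu$); each $W^k$ is a nontrivial solution of $\Delta^2 W = |W|^{\starstar-2}W$ on $\Rn$, hence $I_0(W^k)\geq \frac{2}{N}S_0^{N/4}$. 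Because $d < \frac{2}{N}S_0^{N/4}$ and all these contributions are nonnegative, one must have $n_2 = 0$, and $n_1 \leq 1$; and $n_1 = 1$ is forced since otherwise $u_n \to 0$ in $\Huno$, forcing $d=0$, while $\{u_n\}\subset N_\mu(\Omega)$ cannot converge to $0$ in $\Huno$ (the Nehari constraint and the continuity of $S_\mu^\ba$ bound $\|u_n\|$ away from $0$). Thus $n_1=1$, $n_2=0$, $u_0=0$, so $u_n = v_n^1 + o(1)$ with $v_n^1$ of the form \eqref{ps-type1} built from some solution $V^1$ of \eqref{V}; moreover the energy identity $d = I_\mu(V^1)$ together with $d < \frac{2}{N}S_0^{N/4}$ and $I_\mu(V^1)\geq \frac{2}{N}S_\mu^{N/4}$ is only consistent if $V^1$ is a ground state, hence $V^1 = u_{0,\epsilon}^\mu$ for some $\epsilon$ by the uniqueness/radial symmetry of ground states from \cite{BM}. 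Unravelling the definition \eqref{ps-type1} — the cutoff $\phi(\bar\la_n x)$ converges to $1$ on compacts since $\bar\la_n/\la_n\to 0$ forces $\bar\la_n$ to stay bounded in the relevant regime, and rescaling by $\la_n\to\infty$ corresponds exactly to concentration parameter $\epsilon_n = 1/\la_n \to 0$ — yields $\|u_n - u_{0,\epsilon_n}^\mu\| \to 0$.

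For part (ii): here $\mu=0$ and the hypotheses are not literally that $\{u_n\}$ is a PS sequence on a Nehari manifold, but rather the energy bound $I_0(u_n)\leq \frac{2}{N}S_0^{N/4}$ together with $\lim \|\Delta u_n\|_{L^2}^2 = \lim \|u_n\|_{\starstar}^{\starstar}$. First I would note that these two conditions force $\lim I_0(u_n) = \lim (\frac12 - \frac1{\starstar})\|u_n\|_\starstar^\starstar = \frac{2}{N}\lim\|u_n\|_\starstar^\starstar$, so the sequence is bounded; then I would either (a) apply the concentration–compactness principle of Lions directly to the measures $|\Delta u_n|^2\,dx$ and $|u_n|^\starstar\,dx$, or (b) first replace $u_n$ by $\ga(u_n) u_n$ on the Nehari manifold (which changes norms by a factor $\to 1$ given the asymptotic identity) and mimic the argument of part (i) with $\mu = 0$ and the extra subtlety that bubbles may now concentrate away from the origin. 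With $\mu=0$ the relevant profiles are all translates and dilates of $u_0^0$, and the energy threshold $\frac{2}{N}S_0^{N/4}$ is exactly one bubble's worth, so precisely one bubble $u_{z_n,\epsilon_n}^0$ appears and nothing is left over: $\|u_n - u_{z_n,\epsilon_n}^0\| \to 0$ with $\epsilon_n\to 0$. (If the weak limit $u_0$ of $u_n$ were nonzero it would be a solution of \eqref{eq:problem1} with $\mu=0$ carrying positive energy, leaving strictly less than one bubble for the remainder — impossible; and one checks $u_0\not\equiv 0$ would contradict the energy bound unless $u_n\to u_0$ strongly, a case handled separately.)

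The main obstacle is the bookkeeping in part (ii): translating the abstract PS decomposition, which is stated for genuine Palais–Smale sequences of $I_\mu$, into a statement about sequences satisfying only the "almost Nehari" identity and an energy cap, and in particular ruling out a nonzero weak limit and ensuring that exactly one bubble — not zero — survives while the remainder goes to zero in $\Huno$. The cleanest route is probably to argue directly with the Lions concentration–compactness lemma at the level of the optimal Sobolev constant $S_0$, exploiting that the energy cap equals precisely the energy of a single ground state bubble so that vanishing is excluded (it would give energy $0 < \frac{2}{N}S_0^{N/4}$, contradicting the lower bound coming from the Nehari-type identity and $S_0$ being sharp) and dichotomy is excluded (it would split the mass into two pieces each strictly below the critical level, again impossible by the sharpness of $S_0$), leaving compactness up to translation and dilation, which is exactly the claimed conclusion. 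The extraction of the concentration point $z_n$ and scale $\epsilon_n$ and the strong convergence then follow by the usual rescaling-and-Brezis–Lieb bootstrap already used in the proof of Theorem \ref{ps-theorem}.
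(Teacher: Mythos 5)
Your part (i) is essentially the paper's argument: apply Theorem \ref{ps-theorem}, kill $u_0$ by the no-solution hypothesis, kill the $W$-bubbles by the energy cap, and isolate a single $V$-bubble. However, the step ``$n_1\leq 1$'' is not justified as written. From $I_\mu(V^j)\geq\frac{2}{N}S_\mu^{N/4}$ alone one cannot rule out $n_1=2$, because $S_\mu<S_0$ for $\mu>0$ and nothing a priori prevents $\frac{4}{N}S_\mu^{N/4}<\frac{2}{N}S_0^{N/4}$. The paper invokes \eqref{mu-dash} precisely here: $2^{4/N}S_\mu>S_0$ is equivalent to $\frac{4}{N}S_\mu^{N/4}>\frac{2}{N}S_0^{N/4}$, so two $V$-bubbles already overshoot the cap $d<\frac{2}{N}S_0^{N/4}$. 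You place $\mu\in(0,\mu')$ in the hypotheses but never use the inequality in the bookkeeping; that invocation is the crux of the step and must be made explicit.

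For part (ii) the paper only says ``runs with similar arguments,'' whereas you correctly flag that the hypotheses of (ii) are not literally those of a Palais--Smale sequence. Your observation that $\lim\|\Delta u_n\|_{L^2}^2=\lim\|u_n\|_{\starstar}^{\starstar}$ together with $I_0(u_n)\leq\frac{2}{N}S_0^{N/4}$ forces both limits to equal $S_0^{N/4}$ (so $u_n$ is a minimizing sequence for the Sobolev quotient on $\Omega$, which is not attained) is exactly the right starting point, and either of your two proposed routes — direct Lions concentration--compactness at the level of $S_0$, or Nehari-projection followed by the argument of (i) — fills in what the paper leaves implicit. On this part your proposal is more careful than the published proof.
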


\begin{proof}
(i) From  Theorem \ref{ps-theorem} we know that 
$$u_n=u_0+\sum_{j=1}^{n_1}v_n^j+\sum_{k=1}^{n_2}w_n^k+o(1), $$ where $v_n^j$ and $w_n^k$ are PS sequences of the form \eqref{ps-type1} and \eqref{ps-type2}.
By the assumption  \eqref{eq:problem1} does not have any solution and $I_{\mu}(u_n)<\frac{2}{N}S_0^\frac{N}{4}$. Therefore $u_n$ must be of the form $\sum_{j=1}^{n_1}v_n^j+o(1)$. Note that by the choice of $\mu$ we have $2^\frac{4}{N}S_{\mu}>S_0$, thus if $n_1>1$, then
$$I_{\mu}(u_n)\geq\frac{4}{N}S_{\mu}^\frac{N}{4}>\frac{2}{N}S_{0}^\frac{N}{4},$$ which is a contradiction to the assumption and hence $n_1=1$. Therefore $u_n=u^{\mu}_{0,\epsilon_n}+o(1)$ where $\epsilon_n\to 0$ as $n\to\infty$.

(ii)This proof runs with similar arguments to part (i).
\hfill{$\square$}
\end{proof}

For $u\in\Huno\setminus\{0\}$, define the centre of mass as 
$$F(u)=\frac{\displaystyle\Iom x|\Delta u|^2 dx}{\displaystyle\Iom|\Delta u|^2dx}.$$ We choose $d_1, d_2>0$ such that $\Omega_{d_1}\cong\Omega\cong\Omega^i_{d_2}$.

\begin{Lemma}\label{center of mass}
There exists a $\mu_0\in(0,\mu')$ such that if $\mu\in(0,\mu_0)$ and  $u\in N_{\mu}(\Omega)$ with $I_{\mu}(u)<\frac{2}{N}S_0^\frac{N}{4}$, then $F(u)\in\Omega_{d_1}$.
\end{Lemma}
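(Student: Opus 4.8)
The plan is to argue by contradiction: suppose that for every $n\in\N$ there exist $\mu_n\in(0,\mu')$ with $\mu_n\to 0$ and $u_n\in N_{\mu_n}(\Omega)$ with $I_{\mu_n}(u_n)<\frac{2}{N}S_0^{\frac N4}$, yet $F(u_n)\notin\Omega_{d_1}$. The first step is to record that, since $u_n\in N_{\mu_n}(\Omega)$, the energy is $I_{\mu_n}(u_n)=\frac{2}{N}\big(\Iom |\Delta u_n|^2-\mu_n\frac{u_n^2}{|x|^4}\big)$, so the bound $I_{\mu_n}(u_n)<\frac{2}{N}S_0^{\frac N4}$ together with $S_{\mu_n}\to S_0$ (and $S_{\mu_n}^\ba$ being the relevant infimum) forces, after passing to a subsequence, $\Iom|\Delta u_n|^2\to \ell$ for some $\ell\le S_0^{\frac N4}$, and also $\Iom|u_n|^{\starstar}\to\ell$ by the Nehari constraint together with $\mu_n\Iom\frac{u_n^2}{|x|^4}\to 0$ (the latter because $\mu_n\to 0$ and $\Iom\frac{u_n^2}{|x|^4}$ is bounded via the Rellich inequality). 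One also checks $\ell\ge S_0^{\frac N4}$, hence $\ell=S_0^{\frac N4}$, so $\{u_n\}$ is a minimizing-type sequence for $S_0$.

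The second, central step is to apply a concentration-compactness / Lions-type analysis (exactly the tools already invoked in Theorem \ref{ps-theorem}) to the sequence $|\Delta u_n|^2\,dx$ and $|u_n|^{\starstar}\,dx$. Because the limiting energy equals the best Sobolev constant level $\frac{2}{N}S_0^{\frac N4}$, vanishing and dichotomy are ruled out (dichotomy would split the energy into two pieces each costing at least $\frac{2}{N}S_0^{\frac N4}$, contradicting the upper bound; vanishing would force $\Iom|u_n|^{\starstar}\to0$). Therefore concentration occurs: there are points $y_n\in\Rn$ and scales $\epsilon_n\to 0$ (or possibly $\epsilon_n$ bounded, which one excludes using $\mu_n>0$ and the strict inequality $\frac2N S_{\mu_n}^{\frac N4}<\frac2N S_0^{\frac N4}$, plus the absence of a solution is \emph{not} assumed here — so instead one uses that a concentration at finite scale would give a nontrivial weak limit solving the limit problem in $\Rn$, which after rescaling has energy exactly $\frac2N S_0^{\frac N4}$ and is therefore $u^0_{z,\epsilon}$) such that $u_n$ is, up to $o(1)$ in $\Huno$, a single rescaled bubble $u^{0}_{y_n,\epsilon_n}$ centered near $y_n$. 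Concretely I would invoke Proposition \ref{p:PS}(ii): the hypotheses $I_0(u_n)\le\frac2N S_0^{\frac N4}$ and $\lim|\Delta u_n|^2_{L^2}=\lim|u_n|^{\starstar}_{L^{\starstar}}$ are met in the limit (replacing $I_{\mu_n}$ by $I_0$ costs only $o(1)$ since $\mu_n\Iom\frac{u_n^2}{|x|^4}\to0$), so there exist $z_n\in\Rn$, $\epsilon_n\to0$ with $\|u_n-u^0_{z_n,\epsilon_n}\|\to0$.

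The third step is the computation of $F(u_n)$ from this asymptotic profile. Since $\|u_n-u^0_{z_n,\epsilon_n}\|_{\Huno}\to0$ and $F$ is continuous on $\Huno\setminus\{0\}$ at points with $\Iom|\Delta u|^2$ bounded away from $0$, we get $F(u_n)=F(u^0_{z_n,\epsilon_n})+o(1)$. A direct change of variables gives $F(u^0_{z_n,\epsilon_n})=z_n+\epsilon_n\,\frac{\int_{\Rn}x|\Delta u_0^0(x)|^2dx}{\int_{\Rn}|\Delta u_0^0|^2dx}$; because $u_0^0$ is radial the correction integral vanishes, so in fact $F(u^0_{z_n,\epsilon_n})=z_n$ exactly. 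Moreover the bubbles must essentially sit inside $\Omega$: since $\text{supp}\,u_n\subset\Omega$, a standard argument (if $\text{dist}(z_n,\Omega)$ stayed bounded below, the mass $\int_{B_{\rho\epsilon_n}(z_n)}|u^0_{z_n,\epsilon_n}|^{\starstar}$ concentrating near $z_n$ would have to be carried by $u_n$, forcing $z_n$ within $o(1)$ of $\overline\Omega$) shows $\text{dist}(z_n,\Omega)\to0$. Hence for $n$ large $F(u_n)=z_n+o(1)\in\Omega_{d_1}$, contradicting $F(u_n)\notin\Omega_{d_1}$. This proves the existence of the desired $\mu_0$.

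\textbf{Main obstacle.} The delicate point is the second step — making the concentration-compactness dichotomy argument rigorous at the threshold energy and, in particular, ruling out concentration at a finite scale $\epsilon_n\not\to0$ and showing the bubble is a single $u^0$ rather than a sum of several bubbles or a bubble plus a nonzero regular part; here the strict gap $\frac2N S_{\mu}^{\frac N4}<\frac2N S_0^{\frac N4}$ for $\mu>0$ together with the energy ceiling $\frac2N S_0^{\frac N4}$ is exactly what forbids two or more bubbles, and this is most cleanly packaged by quoting Proposition \ref{p:PS}(ii) after discarding the $\mu_n$-term. The other mild subtlety is continuity of $F$: one needs the denominator $\Iom|\Delta u_n|^2$ bounded away from $0$, which is immediate since it converges to $S_0^{\frac N4}>0$.
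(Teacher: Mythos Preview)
Your proposal is correct and follows essentially the same route as the paper: argue by contradiction with $\mu_n\to 0$, use the Rellich inequality to show $\mu_n\Iom|u_n|^2/|x|^4\to 0$, verify the hypotheses of Proposition~\ref{p:PS}(ii) so that $\|u_n-u^0_{z_n,\epsilon_n}\|\to 0$ for some $\epsilon_n\to 0$, and then derive the contradiction from the location of the concentration points. Your Step~3 (the explicit computation $F(u^0_{z_n,\epsilon_n})=z_n$ and the argument that $\mathrm{dist}(z_n,\Omega)\to 0$) is in fact more detailed than the paper's terse conclusion, and your remark that Proposition~\ref{p:PS} carries a standing nonexistence hypothesis---which the statement of the lemma does not---is a fair observation about the paper itself.
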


\begin{proof}
We prove this by method of contradiction. We assume that there exists a sequence $\{\mu_n\}\subset(0,\mu')$ and $u_n\in N_{\mu}(\Omega)$ such that $\mu_n\to 0$ and 
$I_{\mu}(u_n)<\frac{2}{N}S_0^\frac{N}{4}$ but $F(u_n)\not\in\Omega_{d_1}$. Without loss of generality we may assume that $F(u_n)\to z_0\not\in\Omega_{d_1}$. As $\mu_n\to 0$, using Rellich's inequality we obtain $$\lim_{n\to\infty}\mu_n\displaystyle\Iom\frac{|u_n|^2}{|x|^4}dx=0.$$ This implies,
$$I_0(u_n)\leq\frac{2}{N}S_0^\frac{N}{4} \quad\text{and}\quad \lim_{n\to\infty}|\Delta u_n|^2_{L^2(\Omega)}=\lim_{n\to\infty}|u_n|^{\starstar}_{L^{\starstar}(\Omega)}.$$
Consequently, by Proposition \ref{p:PS}(ii) we find a sequence $\{z_n\}\subset\Rn$ and $\{\epsilon_n\}\subset\R^{+}$ such that $\epsilon_n\to 0$ and $\lim_{n\to\infty}||u_n-u^0_{z_n,\epsilon_n}||=0$. Hence it implies $z_n\to z_0$. On the other hand as $z_0\not\in\Omega_{d_1}$, we get $\lim_{n\to\infty}||u_n-u^0_{z_n,\epsilon_n}||>0$ which is a contradiction and thus the lemma follows.
\hfill{$\square$}
\end{proof}

We define \begin{equation*}
\la:=\inf\displaystyle\left\{\frac{\mu}{|x|^4}:x\in\Omega_{d_1}\right\}>0.
\end{equation*} 
Take $\phi\in C_0^{\infty}(\Rn)$ such that $\phi=1$ in $\{|x|<\frac{d_2}{2}\}$ and $\phi=0$ in $\{|x|\geq d_2\}$. Define
$$v_{z,\epsilon}(x):=\phi(x-z)u^{0}_{z,\epsilon}(x).$$

\begin{Lemma}\label{l:estimates_v}
Let $\mu\in(0,\mu_0)$ and $N\geq 8$. Then there exists  $\epsilon>0$ such that 
$$\sup\big\{I_{\mu}\big(\ga(v_{z,\epsilon})v_{z,\epsilon}\big):  z\in\Omega^i_{d_2}\big\}<\frac{2}{N}S_0^\frac{N}{4}.$$
\end{Lemma}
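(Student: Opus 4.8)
The plan is to convert the claim into a uniform-in-$z$ upper bound for a Rayleigh-type quotient and then pit the classical truncation estimates for the fourth-order extremal against the strictly negative contribution of the Rellich term. First I would record the closed form of $I_{\mu}$ on the Nehari manifold: writing $A(u)=\Iom\bigl(|\De u|^2-\mu\,u^2/|x|^4\bigr)\,dx$ and $B(u)=\Iom|u|^{\starstar}\,dx$, the definition of $\ga$ gives $\ga(u)u\in N_{\mu}$, and since $\starstar=2N/(N-4)$ a direct computation shows
$$I_{\mu}\bigl(\ga(u)u\bigr)=\frac{2}{N}\left(\frac{A(u)}{B(u)^{(N-4)/N}}\right)^{N/4}.$$
As $S_0>0$ and $t\mapsto t^{N/4}$ is increasing, the lemma is therefore equivalent to producing one $\epsilon>0$ with $\sup_{z\in\Omega^{i}_{d_2}}A(v_{z,\epsilon})\,B(v_{z,\epsilon})^{-(N-4)/N}<S_0$. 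Note that $\ga(v_{z,\epsilon})$ is well defined: since $\mu<\bar\mu$, \eqref{Rellich} yields $A(v_{z,\epsilon})\ge(1-\mu/\bar\mu)\Iom|\De v_{z,\epsilon}|^2>0$.

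Next I would invoke the standard estimates for the cut-off extremal. Recall $\uoze(x)=\epsilon^{-(N-4)/2}u_0^0((x-z)/\epsilon)$, with $u_0^0$ the explicit positive ground state of $\De^2u=u^{\starstar-1}$ on $\Rn$, so that $u_0^0(y)\sim|y|^{-(N-4)}$, $|\na u_0^0(y)|\sim|y|^{-(N-3)}$ and $|\De u_0^0(y)|\sim|y|^{-(N-2)}$ as $|y|\to\infty$, while $\int_{\Rn}|\De u_0^0|^2=\int_{\Rn}(u_0^0)^{\starstar}=S_0^{N/4}$. For $z\in\Omega^{i}_{d_2}$ the support of $v_{z,\epsilon}$ lies in $B_{d_2}(z)\subset\Omega$, and since $\phi\equiv1$ near the origin, the substitution $y=(x-z)/\epsilon$ controls every cut-off error by $\int_{|y|>c/\epsilon}|y|^{-2(N-2)}dy=O(\epsilon^{N-4})$ (for the $|\De\cdot|^2$ integral, through the cross terms involving $\na\phi,\De\phi$) and $\int_{|y|>c/\epsilon}|y|^{-2N}dy=O(\epsilon^{N})$ (for the $L^{\starstar}$ integral). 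Thus, uniformly in $z$,
$$\Iom|\De v_{z,\epsilon}|^2\,dx=S_0^{N/4}+O(\epsilon^{N-4}),\qquad \Iom|v_{z,\epsilon}|^{\starstar}\,dx=S_0^{N/4}+O(\epsilon^{N}),$$
and in particular $B(v_{z,\epsilon})^{(N-4)/N}=S_0^{(N-4)/4}\bigl(1+O(\epsilon^{N})\bigr)$.

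The decisive ingredient is a lower bound for the Rellich term. On $\mathrm{supp}\,v_{z,\epsilon}\subset B_{d_2}(z)\subset\Omega\subset\Omega_{d_1}$ one has $\mu|x|^{-4}\ge\la>0$ by the definition of $\la$, hence
$$\mu\Iom\frac{v_{z,\epsilon}^2}{|x|^4}\,dx\;\ge\;\la\Iom v_{z,\epsilon}^2\,dx\;=\;\la\,\epsilon^{4}\int_{\Rn}\phi(\epsilon y)^2\,u_0^0(y)^2\,dy,$$
the last equality by $y=(x-z)/\epsilon$ (the right-hand side is independent of $z$). Since $u_0^0(y)^2\sim|y|^{-2(N-4)}$, we have $u_0^0\in L^2(\Rn)$ exactly for $N\ge9$, while for $N=8$ the integral $\int_{\{|y|<d_2/\epsilon\}}u_0^0(y)^2\,dy$ grows like $|\log\epsilon|$; so, for all small $\epsilon$ and some $c_0>0$ independent of $z$,
$$\mu\Iom\frac{v_{z,\epsilon}^2}{|x|^4}\,dx\;\ge\;c_0\,\epsilon^{4}\ \ (N\ge9),\qquad \mu\Iom\frac{v_{z,\epsilon}^2}{|x|^4}\,dx\;\ge\;c_0\,\epsilon^{4}|\log\epsilon|\ \ (N=8).$$

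Finally I would combine the two: for $N\ge9$ the truncation error satisfies $\epsilon^{N-4}=o(\epsilon^{4})$, so $A(v_{z,\epsilon})\le S_0^{N/4}-\tfrac{c_0}{2}\epsilon^{4}$ for $\epsilon$ small; for $N=8$ the gain $c_0\epsilon^{4}|\log\epsilon|$ swallows the error $O(\epsilon^{4})$, giving $A(v_{z,\epsilon})\le S_0^{N/4}-\tfrac{c_0}{2}\epsilon^{4}|\log\epsilon|$. Dividing by $B(v_{z,\epsilon})^{(N-4)/N}=S_0^{(N-4)/4}(1+O(\epsilon^{N}))$ yields, uniformly in $z\in\Omega^{i}_{d_2}$, $A(v_{z,\epsilon})\,B(v_{z,\epsilon})^{-(N-4)/N}\le S_0\bigl(1-c_1\epsilon^{4}+O(\epsilon^{N})\bigr)<S_0$ for all sufficiently small $\epsilon$; fixing any such $\epsilon$ completes the proof. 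I expect the main obstacle to be the truncation estimates carried out uniformly in $z$, and above all the borderline dimension $N=8$: there the bubble just fails to be in $L^2$, the Rellich gain is only $O(\epsilon^{4}|\log\epsilon|)$ while the truncation error is already $O(\epsilon^{N-4})=O(\epsilon^{4})$, so one must genuinely use the logarithmic divergence to win — which is exactly why the hypothesis $N\ge8$ enters.
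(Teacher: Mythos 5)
Your proposal is correct and follows essentially the same route as the paper: the same truncation estimates $\Iom|\Delta v_{z,\epsilon}|^2=S_0^{N/4}+O(\epsilon^{N-4})$, $\Iom|v_{z,\epsilon}|^{\starstar}=S_0^{N/4}+O(\epsilon^{N})$, $\Iom|v_{z,\epsilon}|^2\gtrsim\epsilon^4$ (resp.\ $\epsilon^4|\log\epsilon|$ for $N=8$), the same lower bound $\mu|x|^{-4}\geq\la$ on $\mathrm{supp}\,v_{z,\epsilon}\subset\Omega\subset\Omega_{d_1}$, and the same dimensional bookkeeping that makes the Rellich gain beat the $O(\epsilon^{N-4})$ truncation error precisely when $N\geq8$. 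The only presentational difference is that you first reduce to the scale-invariant identity $I_{\mu}(\ga(u)u)=\tfrac{2}{N}\bigl(A(u)/B(u)^{(N-4)/N}\bigr)^{N/4}$ and then compare the quotient to $S_0$, whereas the paper bounds $I_{\mu}(tv_{z,\epsilon})$ for all $t>0$ and specializes to $t=\ga(v_{z,\epsilon})$; your version is slightly tidier because it automatically handles the endpoint $t\to0$ (where the paper's negative term $-\tfrac{t^2\la}{2}k\epsilon^4$ degenerates and one must silently fall back on the strict maximality of $t\mapsto\tfrac{t^2}{2}-\tfrac{t^{\starstar}}{\starstar}$ at $t=1$), but it is the same computation.
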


\begin{proof}
Let $v_0:=\displaystyle\left(\frac{1}{1+|x|^2}\right)^\frac{N-4}{2}$. It's well known that $v_0$ is the unique solution to \eqref{eq:problem1} in $\Omega=\Rn$ with $\mu=0$ (see \cite{S}). Let us recall here some results from \cite{BAP}.\\
\begin{itemize}
\item[(i)]$\displaystyle\int_{\Omega}|\Delta v_{z,\epsilon}|^2 dx=\int_{\Rn}|\Delta v_0|^2dx+O(\epsilon^{N-4})dx=S_0^\frac{N}{4}+O(\epsilon^{N-4})$.
\item[(ii)]$\displaystyle\int_{\Omega}|v_{z,\epsilon}|^{\starstar}dx=\int_{\Rn}|v_0|^{\starstar}dx+O(\epsilon^{N})dx=S_0^\frac{N}{4}+O(\epsilon^{N})$.\\
\item[(iii)]\[\int_{\Omega}|v_{z,\epsilon}|^2dx=\left\{\begin{array}{lll}
k\epsilon^4+o(\epsilon^4) \quad \text{if}\ N>8,\\
k\epsilon^4|log\epsilon|+o(\epsilon^4|log\epsilon|) \quad \text{if}\ N=8,\\ 
k\epsilon^{N-4}+o(\epsilon^{N-4}) \quad \text{if}\ N<8.
\end{array}
\right.\]
\end{itemize}
Note that in (iii) $k$ is a positive constant.

\vspace{2mm}

Using the above estimates and the definition of $\la$ i.e.,  $\la\leq\frac{\mu}{|x|^4} \quad\forall\ x\in\Omega$, we estimate $I_{\mu}(tv_{z,\epsilon})$ for any $t>0$:

\begin{eqnarray*}
I_{\mu}(tv_{z,\epsilon})&=&\frac{t^2}{2}\displaystyle\int_{\Omega}\left[|\Delta v_{z,\epsilon}|^2-\mu\frac{|v_{z,\epsilon}|^2}{|x|^4}\right]dx-\frac{t^{\starstar}}{\starstar}\Iom|v_{z,\epsilon}|^{\starstar}dx\\
&\leq&\frac{t^2}{2}\displaystyle\int_{\Omega}\left[|\Delta v_{z,\epsilon}|^2-\la|v_{z,\epsilon}|^2\right]dx-\frac{t^{\starstar}}{\starstar}\Iom|v_{z,\epsilon}|^{\starstar}dx\\
&\leq&\displaystyle\left(\frac{t^2}{2}-\frac{t^{\starstar}}{\starstar}\right)S_0^\frac{N}{4}-\frac{t^2\la}{2}\Iom|v_{z,\epsilon}|^2dx+O(\epsilon^{N-4})+O(\epsilon^N).
\end{eqnarray*}
Here we note that $\sup_{t>0}(\frac{t^2}{2}-\frac{t^{\starstar}}{\starstar})S_0^\frac{N}{4}\leq(\frac{1}{2}-\frac{1}{\starstar})S_0^\frac{N}{4}=\frac{2}{N}S_0^\frac{N}{4}$.
Therefore
\[I_{\mu}(tv_{z,\epsilon})\leq \left\{\begin{array}{lll}\frac{2}{N}S_0^\frac{N}{4}-\frac{t^2\la}{2}k\epsilon^4+o(\epsilon^4)+O(\epsilon^{N-4})+O(\epsilon^N) \quad\text{if}\ N>8,\\
 \frac{2}{N}S_0^\frac{N}{4}-\frac{t^2\la}{2}k\epsilon^4|log\epsilon|+o(\epsilon^4|log\epsilon|)+O(\epsilon^{4})+O(\epsilon^8) \quad\text{if}\ N=8,\\
\frac{2}{N}S_0^\frac{N}{4}-\frac{t^2\la}{2}k\epsilon^{N-4}+o(\epsilon^{N-4})+O(\epsilon^{N-4})+O(\epsilon^N) \quad\text{if}\ N<8.
\end{array}
\right.\]
As a result if  $N\geq 8$,  we can choose $\epsilon>0$ small enough such that $I_{\mu}(tv_{z,\epsilon})<\frac{2}{N}S_0^\frac{N}{4}$ for every $z\in\Omega_{d_2}^i$ and $t>0$. Hence by taking $t=\ga(v_{z,\epsilon})$ we obtain the result.
\hfill{$\square$}
\end{proof}

\vspace{2mm}

{\bf Proof of Theorem  \ref{t:existence}:}\ Suppose equation \eqref{eq:problem1} does not have any solution. Let $\phi:[0,\infty)\times N_{\mu}\to N_{\mu}$ be a pseudo gradient flow associated with 
$I_{\mu}$ (see \cite{Struwe} and \cite{HS}). Namely, $\phi$ satisfies:

\begin{itemize}
\item[(i)]For $t,s\in\R^{+},\ t>s$ and $u\in N_{\mu}$ with $I_{\mu}'(u)\not=0$,
$$I_{\mu}(\phi(t,u))<I_{\mu}(\phi(s,u)).$$
\item[(ii)]$\lim_{t\to\infty}I_{\mu}(\phi(t,u))>-\infty \Longrightarrow
\lim_{t\to\infty}I'_{\mu}(\phi(t,u))=0.$
\end{itemize}
Define \begin{equation}\label{set-V}
 V=\{\ga(v_{z,\epsilon})v_{z,\epsilon}: z\in\Omega^i_{d_2}\},
\end{equation}
 where $\epsilon>0$ is the same as in Lemma \ref{l:estimates_v}. From the definition of $v_{z,\epsilon}$ it follows that $v_{z,\epsilon}\in\Huno$ for every $z\in\Omega^i_{d_2}$. Thus $V\subset N_{\mu}(\Omega)$. Furthermore,  since $\{I_{\mu}(\phi(t,u)): t\geq 0\}$ is bounded from below for each $v\in V$, we see that $\lim_{t\to\infty}I'_{\mu}(\phi(t,u))=0$ for each $v$ in $V$. Consequently, by Proposition \ref{p:PS}(i) there exists $\{\epsilon_t\}\subset\R^{+}$ such that $\lim_{t\to\infty}\epsilon_t=0$ and $\lim_{t\to\infty}||\phi(t,v)-u^{\mu}_{0,\epsilon_t}||=0$. This implies $$\lim_{t\to\infty}F(\phi(t,v))=0 \quad\forall\ v\in V.$$
Also invoking Lemma \ref{center of mass}, we have  $$F(\phi(t,v))\in\Omega_{d_1}.$$
As $\{F(\phi(0,v)): v\in V\}=\Omega^i_{d_2}$ we get that $\Omega^i_{d_2}$ is contractible in $\Omega_{d_1}$. But by the choice of $d_1$ and $d_2$ we have $\Omega_{d_1}\cong\Omega\cong\Omega^i_{d_2}$ which contradicts the assumption that $\Omega$ is not contractible. Hence equation \eqref{eq:problem1} has a solution.
\hfill{$\square$}

\vspace{3mm}

{\bf Acknowledgement} 

\vspace{2mm}

The main part of this research work was done when the author was a postdoc in University of New England, Australia. The author acknowledges the support of the  Australian Research Council (ARC). This research is partially supported by INSPIRE research grant DST/INSPIRE 04/2013/000152.  The author also expresses her sincere gratitude to the anonymous referee for his/her many valuable comments and suggestions which has helped to improve the manuscript in a great extent.

\label{References}


\begin{thebibliography}{XX}
\footnotesize

\bibitem{A}{Adolfsson, V.},  $L^2$- integrability of second-order derivatives for Poisson’s equation in nonsmooth domains. \textit{Math. Scand.} {\bf 70} (1), (1992), 146–160.

\bibitem{BAP}{Bernis, F.; Garc{\'{\i}}a Azorero, J.; Peral, I.},
Existence and multiplicity of nontrivial solutions in semilinear critical problems of fourth order,
\textit{Adv. Differential Equations} {\bf 1}, no. 2, (1996), 219--240. 

\bibitem{BM} {Bhakta, M.;  Musina, R. }, {Entire solutions for a class of variational problems involving the biharmonic operator and Rellich potentials}, Nonlinear Anal. {\textbf 75} (2012), no. 9, 3836--3848.

\bibitem{BS} {Bhakta, M.; Sandeep, K.}; Hardy-Sobolev-Maz'ya type equations in bounded domains, \textit{J. Differential Equations}  {\bf 247}, (2009), no. 1, 119--139.

\bibitem{BL}{Brezis, H.;  Lieb, E.}, A relation between pointwise convergence of functions and convergence of functionals, \textit{Proc. Amer. Math. Soc.},  {\bf 88}, (1983), 486-490.

\bibitem{CKN}
{Caffarelli, L.; Kohn, R.; Nirenberg, L.},
{First Order Interpolation Inequalities with Weight},
\textit{Compositio Math.} \textbf{53} (1984), 259--275.

\bibitem{CR}{Calanchi, M.; Ruf, B.}; Radial and non radial solutions for Hardy-H\'{e}non type elliptic systems, \textit{Calc. Var. Partial Differential Equations}, {\bf 38}, (2010), 111--133.

\bibitem{CM}
{Caldiroli, P.; Musina, R.}, 
Caffarelli-Kohn-Nirenberg type inequalities for the weighted biharmonic operator in cones,
\textit{Milan J. Math.},  {\bf 79}, (2011), no-2, 657--687.

\bibitem{Cor}{Coron, J. M. }, Topologie et cas limite des injections de Sobolev, \textit{C. R. Acad. Sci. Paris S\'{e}r. I Math.}, {\bf 299}, (7), (1984) 209--212.

\bibitem{FG}{Fazly, M.; Ghoussoub, N.}, On the H\'{e}non-Lane-Emden conjecture, \textit{Discrete Contin. Dyn. Syst.}, {\bf 34}, no. 6, (2014),  2513--2533.

\bibitem{FG1}{Ferrero, A.; Gazzola, F.}, Existence of solutions for singular critical growth semilinear elliptic equations, \textit{J. Differential Equations}, {\bf 177}, no. 2, (2001),  494--522.

\bibitem{GM}{Gazzini, M.; Musina, R.}, Hardy-Sobolev-Maz'ya inequalities: symmetry and breaking symmetry of extremal functions. \textit{Commun. Contemp. Math.}, {\bf 11},  no. 6, (2009), 993--1007.

\bibitem{GGS} {Gazzola, F.; Grunau, H. C.; Sweers, G.} Polyharmonic boundary value problems. Positivity preserving and nonlinear higher order elliptic equations in bounded domains. \textit{Lecture Notes in Mathematics}, (1991). Springer-Verlag, Berlin, 2010.

\bibitem{HS}{Hirano, N.; Shioji, N.} Existence of positive solutions for a semilinear elliptic problem with critical Sobolev and Hardy terms, \textit{Proc. Amer. Math. Soc.}, {\bf 134}, no. 9, (2006),  2585--2592.

\bibitem{Mu}{Musina, R.};
Ground state solutions of a critical problem involving cylindrical weights. \textit{Nonlinear Anal.} {\bf 68}, no. 12, (2008),  3972--3986. 

\bibitem{L}{Lions, P. L.} The concentration-compactness principle in the calculus of variations. The limit case II, \textit{Rev. Mat. Iberoamericana}, 1, no 2,  (1985), 45-121.

\bibitem{PP}{P{\'e}rez-Llanos, M. ; Primo,  A.}, Semilinear biharmonic problems with a singular term, \textit{J. Differential Equations} {\bf 257}, (9), (2014), 3200--3225.

\bibitem{Rel54}
{Rellich, F.}, 
Halbbeschr\"ankte Differentialoperatoren h\"oherer Ordnung. 
In: J.C.H. Gerretsen, J. de Groot (Eds.): Proceedings of the International Congress of Mathematicians (1954), Volume III,  243--250. Groningen: Noordhoff 1956.

\bibitem{Rel69}
{Rellich, F.},
{Perturbation theory of eigenvalue problems}, \textit{New York: Courant Institute of Mathematical Sciences, New York University}, 
 (1954).
 
\bibitem{Smets} {Smets, D.}; Nonlinear Schr\'{o}dinger equations with Hardy potential and critical nonlinearities, \textit{Trans. Amer. Math. Soc.} , {\bf 357}, (7), (2005), 2909--2938. 

\bibitem{Soup}{Souplet, P. }, The proof of the Lane-Emden conjecture in four space dimensions, \textit{Adv. Math.} {\bf 221}, (2009), 1409--1427.

\bibitem{Struwe1}{Struwe, M.}; A global compactness result for elliptic boundary value problems involving limiting nonlinearities, \textit{Math.
Z. }, {\bf 187}, (4),  (1984), 511--517.
 
\bibitem{Struwe}{Struwe, M.}; Variational Methods. Applications to Nonlinear Partial Differential Equations and Hamiltonian Systems, \textit{Springer-Verlag, Berlin}, third ed., (2000). 

\bibitem{Ter}{Terracini, S.}, On positive entire solutions to a class of equations with a singular coefficient and critical exponent, \textit{Adv. Differential Equations}, {\bf 2}, (1996), 221--264.
 
\bibitem{S}{Swanson, C.  A.}; The best Sobolev constant, \textit{Appl. Anal.},{\bf 47}, (1992), 227--239.
 
\end{thebibliography}
\end{document}